\newcolumntype{Y}{>{\centering\arraybackslash}X}
\newtheorem{theorem}{Theorem}[section]
\newtheorem{claim}[theorem]{Claim}
\newtheorem{corollary}[theorem]{Corollary}
\newtheorem{algorithm}[theorem]{Algorithm}
\theoremstyle{definition}
\newtheorem{remark}[theorem]{Remark}
\newtheorem{definition}[theorem]{Definition}
\newtheorem{example}[theorem]{Example}
\newcommand{\ZZ}{\mathbb{Z}}
\newcommand{\Hom}{\mathrm{Hom}}
\newcommand{\F}{\mathcal{F}}
\newcommand{\bfs}[1]{\mbox{\boldmath$#1$}}
\newcommand{\bi}{\mathbf{i}}
\newcommand{\Des}{\mathrm{Des}}
\newcommand{\veps}{\varepsilon}
\newcommand{\todo}[1]{\par \noindent
  \framebox{\begin{minipage}[c]{0.95 \textwidth} TO DO:
      #1 \end{minipage}}\par}
\newcommand\commentout[1]{}
\begin{document}

\title{Homomorphism Complexes and Maximal Chains in Graded Posets}

\author{Benjamin Braun}
\address{Department of Mathematics\\
  University of Kentucky\\
  Lexington, KY 40506--0027}
\email{benjamin.braun@uky.edu}

\author{Wesley K. Hough}
\address{Department of Mathematics\\
  University of Wisconsin - Whitewater\\
  Whitewater, WI 53190 U.S.A.}
\email{houghw@uww.edu}

\date{18 December 2018}

\thanks{
  This research was partially supported by a grant from the College of Letters and Sciences at the University of Wisconsin - Whitewater.
  The authors thank Eric Kaper for suggestions regarding Theorem~\ref{thm:eulerrecur} and Bert Guillou for helpful conversations.
}

\begin{abstract}
  We apply the homomorphism complex construction to partially ordered sets, introducing a new topological construction based on the set of maximal chains in a graded poset.
  Our primary objects of study are distributive lattices, with special emphasis on finite products of chains.
  For the special case of a Boolean algebra, we observe that the corresponding homomorphism complex is isomorphic to the subcomplex of cubical cells in a permutahedron.
  Thus, this work can be interpreted as a generalization of the study of these complexes.
  We provide a detailed investigation when our poset is a product of chains, in which case we find an optimal discrete Morse matching and prove that the corresponding complex is torsion-free.
\end{abstract}

\maketitle

\section{Introduction}

Homomorphism complexes were introduced by Lov\'asz as a generalization of the neighborhood complex of a finite simple graph. 
Babson and Kozlov~\cite{BabsonKozlovComplexes,BabsonKozlovLovaszConjecture} proved that graph homomorphism complexes did indeed produce new and interesting topological spaces that led to topological lower bounds for graph chromatic numbers.
However, Schultz~\cite{SchultzSpacesOfCircuits} proved that these lower bounds were generally not better than the original topological lower bounds obtained by Lov\'asz~\cite{LovaszChromaticNumberHomotopy} using the neighborhood complex.
While graph homomorphism complexes have not yet provided the powerful generalization that was originally hoped for, they have generated interesting mathematical developments; further, the general homomorphism complex construction has found other interesting mathematical applications, for example in PL-manifold theory~\cite{csorbalutz,schultzmanifolds}, topological properties of set systems~\cite{jonssonhomset}, and cellular resolutions of monomial ideals~\cite{BraunBrowderKlee,DochtermannEngstromCellular}.

In this work, we apply the homomorphism complex construction to partially ordered sets, introducing a new topological construction $\Hom(C_m,P)$ based on the set of maximal chains in a graded poset $P$.
Our primary object of study is $\Hom(C_m,P)$ when $P$ is a distributive lattice, and we give a detailed analysis of those lattices formed as a finite product of chains.
When $P$ is a Boolean algebra, we observe that $\Hom(C_m,P)$ is isomorphic to the subcomplex of cubical cells in a permutahedron; thus, this work can be interpreted as a generalization of the study of these complexes.
One such generalization has already been undertaken by Severs and White~\cite{severswhite}, and in the case where $P$ is a Boolean algebra, our proof techniques align with theirs.
Our main tool is discrete Morse theory, which we use to produce optimal discrete Morse functions on $\Hom(C_m,P)$ when $P$ is a product of chains; we find that these complexes are torsion-free in all homological dimensions.

The remainder of our paper is structured as follows.
Section~\ref{sec:discretemorsetheory} contains necessary background regarding discrete Morse theory.
Section~\ref{sec:homcomplexes} introduces the general homomorphism complex construction, along with the specific construction for posets we will consider.
Section~\ref{sec:maxchains} introduces the complex $\Hom(C_m,P)$ of maximal chains in a graded poset and establishes basic properties when $P$ is a distributive lattice.
Section~\ref{sec:productsofchains} contains our main result, an optimal discrete Morse function on $\Hom(C_m,P)$ when $P$ is a product of chains, as well as a proof that the homology groups of $\Hom(C_m,P)$ are torsion-free.
Finally, in Section~\ref{sec:examples} we provide examples and corollaries to our main theorems regarding products of chains.

\section{Discrete Morse Theory}\label{sec:discretemorsetheory}

Discrete Morse theory was first developed by R. Forman in \cite{FormanMorseTheory} and has since become a powerful tool for topological combinatorialists.
The main idea of the theory is to systematically pair off faces within a polyhedral cell complex in such a way that we obtain a collapsing order for the complex, yielding a homotopy equivalent cell complex.

\begin{definition}

  A \textit{partial matching} in a poset $P$ is a partial matching in the underlying graph of the Hasse diagram
  of $P$, i.e., it is a subset $M\subseteq P \times P$ such that
  \begin{itemize}
  \item
    $(a,b)\in M$ implies $b \succ a;$ i.e. $a<b$ and no $c$ satisifies $a<c<b$.
  \item
    each $a\in P$ belongs to at most one element in $M$.
  \end{itemize}
  When $(a,b) \in M$, we write $a=d(b)$ and $b=u(a)$.
\item
  A partial matching on $P$ is called \emph{acyclic} if there does not exist a cycle
  \[
    a_1 \prec u(a_1) \succ a_2 \prec u(a_2) \succ \cdots \prec u(a_m) \succ a_1
  \]
  with $m\ge 2$ and all $a_i\in P$ being distinct.

\end{definition}

Given an acyclic partial matching $M$ on a poset $P$, we call an element $c$ \emph{critical} if it is unmatched.
If every element is matched by $M$, we say $M$ is \emph{perfect}.
We are now able to state the main theorem of discrete Morse theory as given in~\cite[Theorem 11.13]{KozlovBook}

\begin{theorem}\label{thm:dmt}
  Let $\Delta$ be a polyhedral cell complex, and let $M$ be an acyclic matching on the face poset of $\Delta$.
  Let $c_i$ denote the number of critical $i$-dimensional cells of $\Delta$.
  \begin{enumerate}
  \item[(a)] The space $\Delta$ is homotopy equivalent to a cell complex $\Delta_c$, called the \emph{Morse complex of $\Delta$ with respect to $M$}, with $c_i$ cells of dimension $i$ for each $i\ge 0$, plus a single $0$-dimensional cell in the case where the empty set is paired in the matching.
  \item[(b)] There is an indexing of the cells of $\Delta_c$ with the critical cells of $\Delta$ such that for any two cells $\sigma$ and $\tau$ of $\Delta_c$ satisfying $\dim(\sigma)=\dim(\tau)+1$, the incidence number $[\tau:\sigma]$ in the cellular chain complex for $\Delta_c$ is given by
    \[
      [\tau:\sigma]=\sum_cw(c)
    \]
    where the sum is taken over all alternating paths $c$ connecting $\sigma$ with $\tau$, i.e. with all sequences $c=(\sigma,a_1,u(a_1),\ldots,a_t,u(a_t),\tau)$ such that $a_1 \prec \sigma$, $\tau\prec u(a_t)$, and $a_{i+1}\prec u(a_i)$ for $i=1,\ldots,t-1$.
    For such an alternating path, the quantity $w(c)$ is defined by
    \begin{equation}\label{eqn:weight}
      w(c):=(-1)^t[a_1:\sigma][\tau:u(a_t)]\prod_{i=1}^t[a_i:u(a_i)]\prod_{i=1}^{t-1}[a_{i+1}:u(a_i)] \, ,
    \end{equation}
    where the incidence numbers on the right-hand side are taken in the complex $\Delta$.
  \end{enumerate}
\end{theorem}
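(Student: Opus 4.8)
The plan is to prove parts (a) and (b) together by decomposing the acyclic matching $M$ into a sequence of single elementary collapses and tracking the effect of each collapse on the homotopy type and on the cellular chain complex of $\Delta$.

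First I would use acyclicity to linearly order the matched pairs. Orient the Hasse diagram of the face poset $P$ of $\Delta$ by pointing every covering edge $a \prec c$ downward, except for the matched edges $a \prec u(a)$, which we point upward; acyclicity of $M$ says precisely that this digraph has no directed cycle, so the partial order it generates admits a linear extension. Reading the matched pairs off in a compatible order produces a list $(a_1,u(a_1)),(a_2,u(a_2)),\dots$ such that, after deleting the first $j-1$ pairs, the cell $u(a_j)$ is maximal in the remaining complex and $a_j$ is a free face of it, i.e.\ $a_j$ lies in no other remaining cell. Deleting the pair $\{a_j,u(a_j)\}$ is therefore an elementary collapse, hence a deformation retract, and the restriction of $M$ to the smaller complex is still acyclic, so we may iterate. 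After all matched pairs have been removed we are left with the subcomplex supported on the critical cells (adding back one vertex if $\emptyset$ was matched, since a polyhedral cell complex cannot be empty), and the composite of the collapses is a homotopy equivalence. This gives (a).

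For (b) I would pass to the cellular chain complex $C_\bullet(\Delta)$ and interpret each of the above collapses as a \emph{Gaussian elimination} of chain complexes: since $[a_j:u(a_j)] = \pm 1$ is a unit, one can change basis to cancel the generator $u(a_j)$ against $a_j$, obtaining a chain complex on the remaining basis cells whose differential is the old one corrected by a term routed through the cancelled pair, $\partial'(\sigma) = \partial(\sigma) - [a_j:\sigma]\,[a_j:u(a_j)]^{-1}\,\partial(u(a_j))$. Composing these eliminations in the chosen order, the differential between two critical cells $\sigma,\tau$ with $\dim\sigma = \dim\tau + 1$ becomes a sum over all zig-zags $\sigma \succ a_1 \prec u(a_1) \succ a_2 \prec u(a_2) \succ \cdots \succ a_t \prec u(a_t) \succ \tau$ passing through successively cancelled pairs, and unwinding the accumulated signs and unit factors $[a_i:u(a_i)]^{-1} = \pm 1$ yields exactly the weight $w(c)$ of~\eqref{eqn:weight}, with the global sign $(-1)^t$ coming from the $t$ eliminations and the endpoint factors $[a_1:\sigma]$ and $[\tau:u(a_t)]$ from entering and leaving the alternating path. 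Acyclicity again guarantees that the sum is finite, since a repeated cell along a zig-zag would produce a closed alternating cycle, which $M$ forbids.

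The delicate part is the sign and coefficient bookkeeping in (b): one must check that the cumulative effect of the Gaussian eliminations is independent of the chosen linear extension (eliminations of pairs incomparable in the extension commute), and that the telescoping product of the $\pm 1$ coefficients collapses precisely to $\prod_{i=1}^t [a_i:u(a_i)] \prod_{i=1}^{t-1} [a_{i+1}:u(a_i)]$ together with the endpoint incidences. Once the single-step Gaussian elimination lemma and the commutation of disjoint eliminations are in hand, part (b) follows by induction on the number of matched pairs.
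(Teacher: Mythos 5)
First, note that the paper does not prove this theorem at all: it is quoted verbatim from Kozlov's book (Theorem 11.13 there) as background, so there is no in-paper argument to compare against and your attempt must stand on its own.

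On its own terms, part (a) of your argument has a genuine gap. You claim that, after ordering the matched pairs compatibly with the acyclicity digraph, each pair $\{a_j,u(a_j)\}$ can be removed by an elementary collapse of the whole complex, and that "after all matched pairs have been removed we are left with the subcomplex supported on the critical cells." Both claims fail: a matched cell $a_j$ that lies in the boundary of a \emph{critical} cell is never a free face (so no elementary collapse of the ambient complex removes it), and the set of critical cells is in general not a subcomplex of $\Delta$. A minimal counterexample is $\Delta=\partial\Delta^2$, a circle with three vertices and three edges, equipped with an optimal acyclic matching having one critical vertex $v$, one critical edge $e$, and two matched pairs: the circle is not collapsible, the matched vertices are faces of the critical edge, and $\{v,e\}$ is not a subcomplex. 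The correct proof (Forman's, as in Kozlov) works with level subcomplexes along a linear extension: passing a matched pair is an \emph{internal} collapse of one level subcomplex onto the previous one, passing a critical cell is, up to homotopy, the attachment of a cell along a \emph{modified} attaching map, and $\Delta_c$ is then assembled from these homotopy pushouts rather than obtained as a subcomplex of $\Delta$. This skeletal induction with modified attaching maps is exactly the idea missing from your outline. Your part (b), by contrast, is essentially the standard algebraic Morse theory argument (Gaussian elimination of the unit entries $[a_j:u(a_j)]=\pm 1$, with the zig-zag expansion of the resulting differential), and the sign bookkeeping you describe does telescope to the weight $w(c)$; but as written it only yields a chain complex chain homotopy equivalent to $C_\bullet(\Delta)$ with basis the critical cells, and identifying that complex with the \emph{cellular} chain complex of the specific CW complex $\Delta_c$ requires the corrected part (a).
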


It is often useful to create acyclic partial matchings on several different sections of the face poset of a polyhedral cell complex and then combine
them to form a larger acyclic partial matching on the entire poset.
This process is detailed in the following theorem known as the \textit{Cluster Lemma} in \cite{JonssonBook} and the \textit{Patchwork Theorem} in \cite{KozlovBook}.

\begin{theorem}\label{patchwork}
  Assume that $\varphi : P \rightarrow Q$ is an order-preserving map.
  For any collection of acyclic matchings on the subposets $\varphi^{-1}(q)$ for $q\in Q$, the union of these matchings is itself an acyclic matching on $P$.
\end{theorem}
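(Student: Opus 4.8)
The plan is to check directly that the union $M := \bigcup_{q\in Q} M_q$ of the given acyclic matchings $M_q$ on the fibers $\varphi^{-1}(q)$ is again a partial matching, and then to do the one substantive part, namely verifying acyclicity. First I would observe that $M$ is a partial matching: the fibers $\varphi^{-1}(q)$ partition $P$, so every $a\in P$ lies in exactly one fiber and hence is incident to at most one pair of $M$; moreover each pair $(a,b)\in M_q$ is a covering relation $a\prec b$ inside $\varphi^{-1}(q)$, and it remains a covering relation in $P$ since no element of $P$ lies strictly between $a$ and $b$ (a fortiori no element of the fiber does). The only real content is acyclicity.

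For acyclicity I would first record the key observation that whenever $(a,b)\in M$ we have $\varphi(a)=\varphi(b)$, because $a$ and $b$ lie in a common fiber by construction. Now suppose toward a contradiction that $M$ admits a cycle
\[
  a_1 \prec u(a_1) \succ a_2 \prec u(a_2) \succ \cdots \prec u(a_m) \succ a_1
\]
with $m\ge 2$ and the $a_i$ distinct. Applying the order-preserving map $\varphi$ to each relation, and using the key observation on the matched pairs $(a_i,u(a_i))$, yields the chain
\[
  \varphi(a_1) = \varphi(u(a_1)) \ge \varphi(a_2) = \varphi(u(a_2)) \ge \cdots \ge \varphi(a_m) = \varphi(u(a_m)) \ge \varphi(a_1),
\]
so all of these values coincide, say with a single $q\in Q$. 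Hence every $a_i$ and every $u(a_i)$ lies in the fiber $\varphi^{-1}(q)$, each matched pair $(a_i,u(a_i))$ belongs to $M_q$, and (by the covering-relation remark above) the displayed sequence is a cycle of the matching $M_q$ on $\varphi^{-1}(q)$. This contradicts the assumed acyclicity of $M_q$, so $M$ is acyclic.

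I do not anticipate a genuine obstacle here; the proof is essentially forced once one notices that an alternating cycle must be constant under $\varphi$. The only point requiring a little care is the bookkeeping that a covering relation between two elements of a fiber, taken in $P$, is still a covering relation when the fiber carries its induced order, so that a cycle in $M$ restricts correctly to a cycle in the appropriate $M_q$. Everything else follows immediately from $\varphi$ being order-preserving together with the fact that matched pairs never cross fibers.
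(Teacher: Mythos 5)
The paper itself gives no proof of this statement; it is quoted as the Cluster Lemma/Patchwork Theorem and cited to Jonsson and Kozlov. Your argument is the standard proof of that result, and its core is correct: pushing an alternating cycle forward along $\varphi$, using $\varphi(a_i)=\varphi(u(a_i))$ on matched pairs and $\varphi(a_{i+1})\le\varphi(u(a_i))$ on the downward steps, forces all images to be equal by antisymmetry in $Q$, so the cycle lives in a single fiber and contradicts acyclicity of that fiber's matching. (Note the equalities need antisymmetry, not a total order on $Q$ -- you get $\varphi(a_1)\ge\varphi(a_2)\ge\cdots\ge\varphi(a_1)$, hence equality; this is fine but worth saying.)

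The one place your write-up is garbled is the claim that a pair $(a,b)\in M_q$, which covers in the subposet $\varphi^{-1}(q)$, is still a covering pair in $P$. As written your justification runs in the wrong direction: ``no element of $P$ lies strictly between $a$ and $b$'' is exactly what you need to prove, and the parenthetical ``a fortiori no element of the fiber does'' deduces the hypothesis you already have from the conclusion you want. The claim is nevertheless true, and the correct reason is that fibers of an order-preserving map are convex: if $a<c<b$ with $\varphi(a)=\varphi(b)=q$, then $\varphi(a)\le\varphi(c)\le\varphi(b)$ forces $\varphi(c)=q$, so any element of $P$ strictly between $a$ and $b$ already lies in $\varphi^{-1}(q)$ and would violate the covering relation there. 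With that substitution your bookkeeping is complete; the reverse transfer you use at the end (a covering relation in $P$ between two fiber elements restricts to one in the fiber) is the trivial direction and is fine as stated.
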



\section{Homomorphism Complexes}\label{sec:homcomplexes}

\subsection{General Constructions}
Suppose that one has two finite sets $A$ and $B$ and a collection $M$ of maps from $A$ to $B$ that we refer to as homomorphisms.
We can endow $M$ with a topological structure as follows.
This definition was originated by Kozlov~\cite[Definition 9.24]{KozlovBook}, who suggested applying it to a broad range of objects including the posets we consider in this work.

\begin{definition}
  Let $A$ and $B$ be finite sets.  
  Let $M$  be a collection of maps from $A$ to $B$ called homomorphisms.  
  Let $P(A,B)$ be the polyhedral cell complex $\prod_{i\in A}\Delta_B$, where $\Delta_B$ is the simplex with vertices labeled by elements of $B$.  
  We record a face of $P(A,B)$ by an $|A|$-tuple of the form $X=(X_i)_{i\in A}$, with each $\emptyset \neq X_i\subseteq B$.
  The \emph{homomorphism complex} $\Hom_M(A,B)$ is the subcomplex of $P(A,B)$ consisting of all cells labeled by $(W_i)_{i\in A}\in P(A,B)$ satisfying the following property: if $\eta:A\rightarrow B$ is such that $\eta(i)\in W_i$ for all $i\in A$, then $\eta\in M$.  
  We call such a $(W_i)_{i\in A}\in P(A,B)$ a \textit{multi-homomorphism}, and we call $A$ the \textit{test object} and $B$ the \textit{target object} for the complex.
  The faces of $\Hom_M(A,B)$ are partially ordered by $X\leq Y$ if and only if $X_i\subseteq Y_i$ for all $i\in A$.  
\end{definition}

Given a cell $(W_i)_{i\in A}\in\Hom_M(A,B)$, we will usually write $W_i=\eta(i)$, where $\eta$ is understood to be a set-valued function.
The cells in $\Hom_M(A,B)$ are all products of simplices; such complexes are called \emph{prodsimplicial} and are special examples of polyhedral cell complexes.
For two graphs $H$ and $G$, we define the \emph{graph homomorphism complex} $\Hom(H,G)$ to be the homomorphism complex resulting from defining $M$ to be the set of edge-preserving maps from $V(H)$ to $V(G)$.
This was the original context in which homomorphism complexes were developed, though the construction is much more general.

When $A$ and $B$ are finite posets, then there are many potentially-interesting choices of maps $M$ to use in constructing a homomorphism complex.
In this paper, we will use the following definition.

\begin{definition}\label{def:posetstricthom}
  Let $P$ and $Q$ be finite posets, and let $M$ be the set of strictly-order-preserving maps from $P$ to $Q$.
  The \emph{poset homomorphism complex} $\Hom(P,Q)$ is defined to be $\Hom_M(P,Q)$ for this choice of $M$.
\end{definition}

\begin{example}\label{ex:chainbooleanhom}
  Figure~\ref{fig:chainboolean} shows a homomorphism from $C_3=\{0<1<2<3\}$ into a Boolean algebra on $3$ elements.
  Figure~\ref{fig:chainbooleanhom} shows the corresponding homomorphism complex, with the vertices corresponding to homomorphisms labeled $(\eta(0),\eta(1),\eta(2),\eta(3))$.
  The edge connecting $(\emptyset,1,12,123)$ with $(\emptyset,2,12,123)$ corresponds to the multihomomorphism $(\emptyset,\{1,2\},12,123)$.
\end{example}

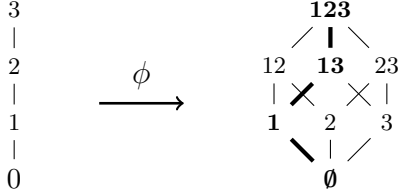
\begin{figure}
  \centering
  \begin{tikzpicture}[scale=0.75]
    \node (0) at (0,0) {$0$};
    \node (1) at (0,1) {\footnotesize $1$};
    \node (2) at (0,2) {\footnotesize $2$};
    \node (3) at (0,3) {\footnotesize $3$};
    \draw (0) -- (1) -- (2) -- (3);
  \end{tikzpicture}
  \hspace{0.25in}
  \begin{tikzpicture}[scale=0.75]
    \draw [line width = 1pt, ->] (0,0)--(1.5,0);
    \draw node at (0.75,0.5) {\large$\phi$};
    \draw node at (0.75,-1.5) {};
  \end{tikzpicture}
  \hspace{0.25in}
  \begin{tikzpicture}[scale=0.75]
    \node [] (empty) at (1,0) {\footnotesize $\bfs{\emptyset}$};
    \node [] (1) at (0,1) {\footnotesize \textbf{1}};
    \node (2) at (1,1) {\footnotesize $2$};
    \node (3) at (2,1) {\footnotesize $3$};
    \node (12) at (0,2) {\footnotesize $12$};
    \node [] (13) at (1,2) {\footnotesize \textbf{13}};
    \node (23) at (2,2) {\footnotesize $23$};
    \node [] (123) at (1,3) {\footnotesize \textbf{123}};
    \draw [ultra thick] (empty)--(1);
    \draw (empty)--(2);
    \draw (empty)--(3);
    \draw (1)--(12);
    \draw [ultra thick] (1)--(13);
    \draw (2)--(12);
    \draw (2)--(23);
    \draw (3)--(13);
    \draw (3)--(23);
    \draw (12)--(123);
    \draw [ultra thick] (13)--(123);
    \draw (23)--(123);
  \end{tikzpicture}
  \caption{A homomorphism from $C_3$ into a Boolean algebra on $3$ elements.}
  \label{fig:chainboolean}
\end{figure}

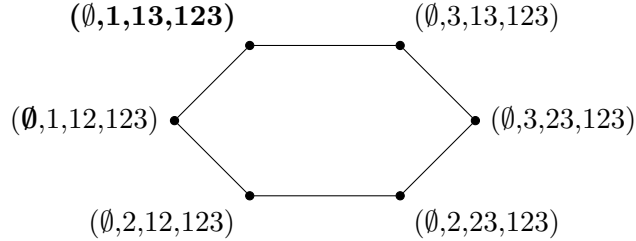
\begin{figure}
  \centering
  \begin{tikzpicture}
    \node[draw,circle,black,fill=black,scale=0.3,label=left:{($\bfs{\emptyset}$,1,12,123)}] (1) at (-2,0) {};
    \node[draw,circle,black,fill=black,scale=0.3,label=above left:{\textbf{($\emptyset$,1,13,123)}}] (2) at (-1,1){};
    \node[draw,circle,black,fill=black,scale=0.3,label=above right:{($\emptyset$,3,13,123)}] (3) at (1,1){}; 
    \node[draw,circle,black,fill=black,scale=0.3,label=right:{($\emptyset$,3,23,123)}] (4) at (2,0){}; 
    \node[draw,circle,black,fill=black,scale=0.3,label=below right:{($\emptyset$,2,23,123)}] (5) at (1,-1){};
    \node[draw,circle,black,fill=black,scale=0.3,label=below left:{($\emptyset$,2,12,123)}] (6) at (-1,-1){};
    \draw (1)--(2)--(3)--(4)--(5)--(6)--(1);
    
  \end{tikzpicture}
  \caption{The homomorphism complex from $C_3$ into a Boolean algebra on $3$ elements, with the homomorphism from Figure~\ref{fig:chainboolean} in bold.}
  \label{fig:chainbooleanhom}
\end{figure}


\subsection{Foldings}

In the graph context, the homotopy type of $\Hom(H,G)$ is preserved by a ``folding'' operation on either $H$ or $G$, defined as follows.

\begin{definition}
  Let $\mathcal{N}(v)$ denote the neighborhood of $v$ in $G$.
  We say $G - v$ is a \textbf{fold} of $G$ if there exists $u \in V(G)$ with $u \neq v$ such that $\mathcal{N}(u) \supseteq \mathcal{N}(v)$.
\end{definition}

For a polyhedral cell complex $X$, let $Bd \ X$ denote the barycentric subdivision of $X$.

\begin{theorem} (Kozlov \cite{KozlovSimpleFold})\label{thm:graphfold}
  Let $G - v$ be a fold of $G$, and let $H$ be some graph.
  Then, $Bd \ \Hom(G,H)$ collapses onto $Bd \ \Hom(G-v,H)$, and $\Hom(H,G)$ collapses onto $\Hom(H,G-v)$.
\end{theorem}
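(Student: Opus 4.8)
The plan is to establish the two collapses separately, using discrete Morse theory (Theorem~\ref{thm:dmt}) for the assertion about $\Hom(H,G)$ and the theory of closure operators on face posets for the assertion about $Bd\,\Hom(G,H)$. Throughout I assume, as elsewhere in the paper, that $G$ is a simple graph, so that the hypothesis $\mathcal{N}(v)\subseteq\mathcal{N}(u)$ makes the \emph{folding map} $f\colon G\to G-v$ (fixing every vertex but sending $v\mapsto u$) a graph homomorphism retracting $G$ onto $G-v$; the adjustments needed to allow loops are routine.

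For the statement that $\Hom(H,G)\searrow\Hom(H,G-v)$, the key observation is that, because $\mathcal{N}(v)\subseteq\mathcal{N}(u)$, whenever $\eta$ is a multihomomorphism from $H$ to $G$ with $v\in\eta(i)$ for some vertex $i$ of $H$, one may add $u$ to $\eta(i)$ or, if $u$ is already present (and $\eta(i)\neq\{u\}$, which is automatic here since $v\in\eta(i)$), delete it, and still have a multihomomorphism: for every $H$-neighbor $w$ of $i$ and every $g\in\eta(w)$ we have $g\in\mathcal{N}(v)\subseteq\mathcal{N}(u)$, so $u$ is compatible with $\eta(w)$. Using this, I would define an acyclic matching on the face poset of $\Hom(H,G)$: fix a linear order on $V(H)$, and match each cell $\eta$ that uses the vertex $v$ with the cell obtained by toggling the membership of $u$ in the first coordinate $i_0$ with $v\in\eta(i_0)$. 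The matched pairs are incident in codimension one, and the critical cells are precisely the cells of the subcomplex $\Hom(H,G-v)$. Acyclicity I would deduce from the Patchwork Theorem (Theorem~\ref{patchwork}) via the order-preserving map $\eta\mapsto(\eta(i)\setminus\{u\})_{i\in V(H)}$: on each of its fibers the index $i_0$ is constant, so the matching restricts to a disjoint union of single edges, which is trivially acyclic. Since the critical cells form a subcomplex and every cell outside it is matched, Theorem~\ref{thm:dmt}, together with the standard fact that such a matching yields an \emph{elementary collapse} onto the subcomplex of critical cells, gives the collapse.

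For the statement that $Bd\,\Hom(G,H)\searrow Bd\,\Hom(G-v,H)$, I would pass to order complexes: if $F$ is the face poset of $\Hom(G,H)$, then $Bd\,\Hom(G,H)$ is the order complex $\Delta(F)$. Define $g\colon F\to F$ by $g(\eta)(w)=\eta(w)$ for $w\neq v$ and $g(\eta)(v)=\eta(v)\cup\eta(u)$; the hypothesis that every $G$-neighbor of $v$ is a $G$-neighbor of $u$ shows $g(\eta)$ is again a multihomomorphism, and $g$ is an order-preserving idempotent with $g(\eta)\geq\eta$, i.e.\ a closure operator, so $\Delta(F)$ collapses onto $\Delta(F_1)$ with $F_1=\{\eta\in F:\eta(u)\subseteq\eta(v)\}$. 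On $F_1$ define $q(\eta)(w)=\eta(w)$ for $w\neq v$ and $q(\eta)(v)=\eta(u)$; again $q(\eta)$ is a multihomomorphism, and $q$ is an order-preserving idempotent with $q(\eta)\leq\eta$, i.e.\ an interior operator, so $\Delta(F_1)$ collapses onto $\Delta(F_2)$ with $F_2=\{\eta\in F:\eta(v)=\eta(u)\}$. Finally, coordinate restriction $\eta\mapsto(\eta(w))_{w\neq v}$ is a poset isomorphism $F_2\cong F(\Hom(G-v,H))$, with inverse the extension assigning $v$ the coordinate $\zeta(u)$ (a multihomomorphism of $G$ exactly because $\mathcal{N}(v)\subseteq\mathcal{N}(u)$); hence $\Delta(F_2)\cong Bd\,\Hom(G-v,H)$, and composing the two collapses finishes the proof.

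The part requiring the most care — and the only place the fold hypothesis is used — is checking that the multihomomorphism condition survives each operation: toggling $u$ in a coordinate of an $H$-multihomomorphism, forming the unions $\eta(v)\cup\eta(u)$, replacing $\eta(v)$ by $\eta(u)$, and extending a $(G-v)$-multihomomorphism back across $v$. Two secondary points deserve attention: that the discrete Morse argument in the first case produces a genuine collapse of the (un-subdivided) polyhedral complex rather than merely a homotopy equivalence — this relies on the matched pairs being incident in codimension one and on the critical cells forming a subcomplex, both true by construction — and the correct invocation of the standard fact that a closure operator on a finite poset induces a collapse of its order complex onto the order complex of its image, together with its order-dual for interior operators.
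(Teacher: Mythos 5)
Your argument is correct and follows essentially the same route as the paper (which cites Kozlov for this theorem but reproduces his argument when proving the poset analogue, Theorem~\ref{posetfold}): the $Bd\,\Hom(G,H)$ half uses the same ascending-then-descending closure operators $\eta(v)\mapsto\eta(v)\cup\eta(u)$ and $\eta(v)\mapsto\eta(u)$, and the $\Hom(H,G)$ half uses the same pairing that toggles $u$ in the first coordinate whose value contains $v$. The only difference is bookkeeping in the second half: the paper certifies the collapse by explicitly ordering the elementary collapses lexicographically by $(i(\alpha),-\dim\alpha)$ and checking that order works, whereas you certify it via the Patchwork Theorem together with the standard fact that an acyclic matching whose critical cells form a subcomplex induces a collapse onto that subcomplex.
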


Therefore, there exists a suitable neighborhood condition that allows us to effectively ignore certain vertices in either the test or target graph.
We next make an analogous definition of folding for posets.

\begin{definition}
  Let $P$ be a finite poset with $x\in P$.
  Let $\mathcal{U}(x)$ denote the set of elements that cover $x$, and let $\mathcal{D}(x)$ denote the set of elements covered by $x$.
  We say that $P - x$ is a \textbf{fold} of $P$ if there exists $y \in P$ with $x \neq y$ such that $\mathcal{U}(y) \supseteq \mathcal{U}(x)$ and $\mathcal{D}(y) \supseteq \mathcal{D}(x)$.
\end{definition}

Observe that $\mathcal{U}(x) \cup \mathcal{D}(x)$ is equal to the graph-theoretic neighborhood of $x$ in the Hasse diagram for $P$.
This modified notion of folds and poset neighborhoods yields the following.

\begin{theorem} \label{posetfold}
  Let $P - x$ be a fold of $P$ with some poset $Q$.
  Then, $Bd \ \Hom(P,Q)$ collapses onto $Bd \ \Hom(P-x,Q)$, whereas $\Hom(Q,P)$ collapses onto $\Hom(Q,P-x)$.
\end{theorem}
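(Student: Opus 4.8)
The plan is to mirror Kozlov's proof of Theorem~\ref{thm:graphfold}, treating the test-object and target-object statements separately. Two elementary observations are used throughout. First, since $P$ and $Q$ are finite, a map between them is strictly order-preserving if and only if it strictly preserves every covering relation (insert a saturated chain between comparable elements); consequently a tuple $(W_i)$ is a multihomomorphism precisely when $p<q$ for every covering pair $a\lessdot b$ in the test poset and all $p\in W_a$, $q\in W_b$ --- a purely local condition. Second, the fold hypothesis $\mathcal{U}(y)\supseteq\mathcal{U}(x)$ and $\mathcal{D}(y)\supseteq\mathcal{D}(x)$ forces $x$ and $y$ to be incomparable and, again via saturated chains, yields $p<x\Rightarrow p<y$ and $p>x\Rightarrow p>y$ for all $p\in P$. (If $x$ is isolated in the Hasse diagram of $P$ the statement is checked directly, so I ignore that case.) These two facts are exactly what make the ``local'' fold condition interact correctly with the ``local'' description of multihomomorphisms.

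For the test-object statement I would first realize $\mathcal{F}(\Hom(P-x,Q))$ as a \emph{subposet} --- not a subcomplex --- of $\mathcal{F}(\Hom(P,Q))$. Given a cell $(W_i)_{i\in P-x}$ of $\Hom(P-x,Q)$, let $W_x^{\max}$ be the largest subset of $Q$ for which $(W_i)_{i\in P}$ is still a multihomomorphism of $P$; using the observations above one checks that $W_x^{\max}$ is nonempty (it contains $W_y$), that it is monotone in the remaining coordinates, and hence that $s\colon (W_i)_{i\in P-x}\mapsto (W_i)_{i\in P-x}\cup\{W_x:=W_x^{\max}\}$ is a poset embedding of $\mathcal{F}(\Hom(P-x,Q))$ onto $\{(W_i)_{i\in P}:W_x=W_x^{\max}\}$. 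Next I would define $\phi\colon \mathcal{F}(\Hom(P,Q))\to\mathcal{F}(\Hom(P,Q))$ by replacing the $x$-coordinate of each cell with the corresponding $W_x^{\max}$; this is order-preserving, satisfies $\phi(\sigma)\ge\sigma$, and is idempotent, i.e.\ an ascending closure operator whose fixed-point set is $s(\mathcal{F}(\Hom(P-x,Q)))$. An ascending closure operator on a finite poset induces a collapse of its order complex onto the order complex of its fixed-point set (standard; see \cite{KozlovBook}, or produce the matching directly via Theorem~\ref{thm:dmt} and Theorem~\ref{patchwork}), so $Bd\ \Hom(P,Q)$ collapses onto $\Delta(\mathrm{Im}\,\phi)\cong Bd\ \Hom(P-x,Q)$.

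For the target-object statement, note that $\Hom(Q,P-x)$ is identified with the subcomplex of $\Hom(Q,P)$ of cells $(W_q)$ with $x\notin W_q$ for all $q$ (the inclusion $P-x\hookrightarrow P$ being strictly order-preserving), and that the fold retraction $r\colon P\to P-x$ fixing $P-x$ and sending $x\mapsto y$ is strictly order-preserving --- precisely because $p<x\Rightarrow p<y$ and $p>x\Rightarrow p>y$. I would then build an acyclic matching on $\mathcal{F}(\Hom(Q,P))$ whose critical cells are exactly that subcomplex: fix a linear order $q_1,\dots,q_n$ on $Q$; in a cell in which $x$ occurs, let $q_j$ be the first coordinate containing $x$, and match the cell with the one obtained by toggling whether $y\in W_{q_j}$. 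The observations above show this toggle always produces a valid cell and never changes which coordinate is first to contain $x$, so the matching is well-defined and matched cells differ in dimension by one. Acyclicity follows from the Patchwork Theorem~\ref{patchwork} applied to the map sending each cell to $j$ (in the reversed order, with cells avoiding $x$ sent to a new bottom element): each fiber's matching toggles a single fixed element in a single fixed coordinate and is therefore acyclic. By the collapse form of discrete Morse theory (Theorem~\ref{thm:dmt}, together with the fact that an acyclic matching whose critical cells form a subcomplex yields a collapse onto it), $\Hom(Q,P)$ collapses onto $\Hom(Q,P-x)$.

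The step I expect to be most delicate is the bookkeeping in the first half: checking that $W_x^{\max}$ is nonempty, that it depends monotonically on the other coordinates so that $s$ is a genuine order-embedding onto a subposet of the face poset, and that $\phi$ is idempotent --- in short, that the ``maximal admissible $x$-coordinate'' really does assemble into a well-behaved closure operator. Once that is in place, both halves are routine applications of discrete Morse theory, exactly as in the graph case, and the only essentially new input is the pair of order-theoretic consequences of the poset fold condition isolated at the outset.
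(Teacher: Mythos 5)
Your second half (the target-object statement) is essentially the paper's own argument: the paper uses the same pairing --- add $y$ to the first coordinate containing $x$ --- and orders the resulting elementary collapses lexicographically by $(i(\eta),-\dim\eta)$, which is your Patchwork packaging unwound. The gap is in the first half. The assignment $W\mapsto W_x^{\max}(W)$ is \emph{antitone}, not monotone, in the remaining coordinates: enlarging the sets $W_a$ for $a\in\mathcal{U}(x)\cup\mathcal{D}(x)$ imposes more inequalities on an admissible $q$, so the maximal admissible $x$-coordinate shrinks. Concretely, take $P$ to be the diamond $w<x,y<z$ (so $P-x$ is a fold) and $Q=B_3$. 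The vertex $\sigma=(\{\emptyset\},\{12\},\{12\},\{123\})$ lies below the edge $\tau=(\{\emptyset,1\},\{12\},\{12\},\{123\})$, yet $W_x^{\max}(\sigma)$ is the set of all six nonempty proper subsets of $\{1,2,3\}$ while $W_x^{\max}(\tau)=\{12,13\}$. Hence $\phi(\sigma)\not\le\phi(\tau)$, so your $\phi$ is not order-preserving and therefore not a closure operator in the sense required by Theorem~\ref{Koperator}; for the same reason $s$ is not an order embedding, and the induced order on $\mathrm{Im}\,\phi$ is strictly coarser than that of $\mathcal{F}(\Hom(P-x,Q))$. The step you flagged as delicate is exactly the one that fails.

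The repair is to use the $y$-coordinate itself instead of the maximal admissible set, which is what the paper does. Identify $\mathcal{F}(\Hom(P-x,Q))$ with the cells satisfying $\eta(x)=\eta(y)$, and compose two genuinely order-preserving operators: the ascending closure operator $\alpha$ with $\alpha\eta(x)=\eta(x)\cup\eta(y)$ (well defined by the fold condition, and monotone because both $\eta(x)$ and $\eta(y)$ grow with $\eta$), followed by the descending closure operator $\beta$ with $\beta\eta(x)=\eta(y)$ on the subposet where $\eta(x)\supseteq\eta(y)$. Each induces a collapse of the barycentric subdivision by Theorem~\ref{Koperator}, and the image of $\beta\circ\alpha$ is exactly $\mathcal{F}(\Hom(P-x,Q))$.
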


To prove this, we need only slightly modify Kozlov's original proof in~\cite{KozlovSimpleFold} of Theorem~\ref{thm:graphfold}.
We first review basic facts about closure operators.

\begin{definition}
  An order-preserving map $\phi$ from a poset $P$ to itself is a \textit{descending closure operator} if $\phi^2 = \phi$ and $\phi(x) \leq x$ for every $x \in P$.
  Similarly, $\phi$ is an \textit{ascending closure operator} if $\phi^2 = \phi$ and $\phi(x) \geq x$ for every $x \in P$.
\end{definition}

It is well-known in topological combinatorics that ascending and descending closure operators induce strong deformation retracts.

\begin{definition}
  Given a poset $P$, $\Delta(P)$ is the \textit{order complex} of $P$, the simplicial complex whose simplices are the chains of $P$.
  Moreover, if $P$ is the face poset of a polyhedral cell complex $X$, then $\F(\Delta(P)) = Bd \ X$.
\end{definition}

\begin{theorem} (Kozlov~\cite{KozlovSimpleFold}) \label{Koperator}
  Let $P$ be a poset, and let $\phi$ be a descending closure operator.
  Then, $\Delta(P)$ collapses onto $\Delta(\phi(P))$.
  By symmetry, the same is true for an ascending closure operator.
\end{theorem}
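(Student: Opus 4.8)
The plan is to construct an explicit acyclic matching on the face poset of $\Delta(P)$ whose critical faces are exactly the chains of $P$ that lie in $\phi(P)$; since these critical faces constitute the subcomplex $\Delta(\phi(P))$, discrete Morse theory then yields that $\Delta(P)$ collapses onto $\Delta(\phi(P))$. (Theorem~\ref{thm:dmt}(a) already gives the homotopy equivalence, and the standard refinement — an acyclic matching whose unmatched faces form a subcomplex is realized by a sequence of elementary collapses onto that subcomplex, e.g. by processing matched pairs along the partial order — gives the collapse.) The statement for an ascending closure operator follows at once by passing to the opposite poset, on which $\phi$ becomes a descending closure operator and whose order complex is literally the same simplicial complex.

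Here is the matching. A face of $\Delta(P)$ is a nonempty chain $\sigma$ of $P$, and by idempotence $\phi(P)$ is precisely the fixed-point set of $\phi$. If $\sigma\subseteq\phi(P)$, leave $\sigma$ unmatched. Otherwise let $x_\sigma$ be the least element of the totally ordered set $\sigma$ that is not a fixed point, and put $e_\sigma:=\phi(x_\sigma)$; then $e_\sigma<x_\sigma$. Match $\sigma$ with $\sigma\bigtriangleup\{e_\sigma\}$, i.e. delete $e_\sigma$ from $\sigma$ if present and adjoin it otherwise. The bookkeeping is routine: (a) $\sigma\bigtriangleup\{e_\sigma\}$ is again a chain, since every $y\in\sigma$ with $y<x_\sigma$ is a fixed point and hence $y=\phi(y)\le\phi(x_\sigma)=e_\sigma$, while every $y\in\sigma$ with $y>x_\sigma$ satisfies $e_\sigma<x_\sigma<y$, so $e_\sigma$ is comparable to all of $\sigma$; (b) $e_\sigma$ is a fixed point, so toggling it changes neither $\sigma\setminus\phi(P)$ nor, therefore, $x_\sigma$ and $e_\sigma$, whence the rule is an involution pairing covering faces; (c) $x_\sigma\ne e_\sigma$, so both faces of every matched pair contain the non-fixed point $x_\sigma$, and in particular the matching never touches a face of $\Delta(\phi(P))$. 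Thus the unmatched faces are exactly the faces of $\Delta(\phi(P))$.

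The one step that requires genuine argument is acyclicity, and I would derive it from the Patchwork Theorem (Theorem~\ref{patchwork}). Map the face poset of $\Delta(P)$ to the poset $Q$ of chains of $P\setminus\phi(P)$ ordered by inclusion, via $\varphi(\sigma)=\sigma\cap(P\setminus\phi(P))$; this is order preserving and, by (b), constant on every matched pair, so the matching is the union of its restrictions to the fibers of $\varphi$. The fiber over $\emptyset$ carries the empty matching. Over a nonempty chain $S$, the value $x_\sigma=\min S$, hence $e_\sigma$, is a single fixed element $e$, and the induced matching just toggles $e$; such an element-toggling matching has no alternating cycle, because from $u(a)=a\cup\{e\}$ the only way to descend to a face that is again the lower element of a matched pair (and hence does not contain $e$) is to remove $e$, returning to $a$. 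By the Patchwork Theorem the union is acyclic, completing the proof. The main obstacle is thus essentially this acyclicity argument together with the comparability check (a); as an alternative one can dispense with the matching and induct on $|P\setminus\phi(P)|$, peeling off a minimal non-fixed point $x$, whose link in $\Delta(P)$ is $\Delta(P_{<x})*\Delta(P_{>x})$, a cone with apex $\phi(x)$ and so collapsible, so that $\Delta(P)$ collapses onto $\Delta(P\setminus\{x\})$ while $\phi$ restricts to a descending closure operator on $P\setminus\{x\}$ with the same image.
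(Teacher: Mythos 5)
The paper cites this result from Kozlov without supplying a proof of its own, so there is no in-paper argument to compare against; your proof stands on its own, and it is correct. The matching you describe --- toggle $e_\sigma=\phi(x_\sigma)$, where $x_\sigma$ is the least non-fixed element of the chain $\sigma$, with Patchwork fiber map $\varphi(\sigma)=\sigma\setminus\phi(P)$ --- is the standard one for descending closure operators, and your verifications are sound: $e_\sigma$ is comparable to every element of $\sigma$ (fixed elements below $x_\sigma$ map below $e_\sigma$ by monotonicity; elements above $x_\sigma$ lie above $e_\sigma<x_\sigma$); $e_\sigma$ is itself a fixed point, so the toggle is an involution that preserves $\sigma\setminus\phi(P)$ and hence $x_\sigma$; the unmatched faces are exactly those contained in $\phi(P)$; and each nonempty fiber carries a perfect matching toggling a single fixed element, which is trivially acyclic, so the Patchwork Theorem applies. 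The one point I would tighten is the appeal to Theorem~\ref{thm:dmt}(a): as quoted in the paper it gives a homotopy equivalence, not a collapse, whereas the theorem you are proving asserts a collapse. You flag this and invoke the standard strengthening that an acyclic matching whose critical cells form a subcomplex realizes an actual collapse onto that subcomplex; this is correct, but since the collapse is the whole content of the statement, that strengthening deserves to be stated explicitly with a reference (it appears, for instance, alongside Kozlov's Theorem 11.13) rather than slipped in as a parenthetical. Your alternative inductive argument --- peel off a minimal non-fixed $x$, whose link in $\Delta(P)$ is coned by $\phi(x)$ --- is also correct and avoids the Patchwork machinery, though it quietly uses the fact that a vertex with coned link can be deleted by elementary collapses, which is itself proved by essentially the same toggle matching restricted to the faces containing $x$.
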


\begin{proof}[Proof of Theorem \ref{posetfold}]

  First, we show that $Bd \ \Hom(P,Q)$ collapses onto $Bd \ \Hom(P-x,Q)$.
  Identify $\F(\Hom(P-x,Q))$ with the subposet of $\F(\Hom(P,Q))$ consisting of all $\eta$ such that $\eta(x) = \eta(y)$.
  Let $X$ be the subposet consisting of all $\eta \in \F(\Hom(P,Q))$ satisfying $\eta(x) \supseteq \eta(y)$.
  Then, $\F(\Hom(P-x,Q)) \subseteq X \subseteq \F(\Hom(P,Q))$.
  Consider order-preserving maps
  $$\F(\Hom(P,Q)) \stackrel{\alpha}{\rightarrow} X \stackrel{\beta}{\rightarrow} \F(\Hom(P-x,Q)),$$
  defined by
  $$\alpha \eta(z) = \left\{ \begin{array}{ll} \eta(y) \cup \eta(x), & \mathrm{for} \ z = x; \\ \eta(z), &\mathrm{otherwise;} \end{array} \right. \qquad
  \beta \eta(z) = \left\{ \begin{array}{ll} \eta(y), & \mathrm{for} \ z = x; \\ \eta(z), &\mathrm{otherwise;} \end{array} \right.$$
  for all $z \in P$.
  Maps $\alpha$ and $\beta$ are well defined because $P-x$ is a fold of $P$.
  It is straightforward to verify that $\alpha$ is an ascending closure operator and $\beta$ is a descending closure operator.
  Since the image of $\beta\circ\alpha$ is $\F(\Hom(P-x,Q)$, the result follows from Theorem~\ref{Koperator}.

  Next, we show that $\Hom(Q,P)$ collapses onto $\Hom(Q,P-x)$ by presenting a sequence of elementary collapses.
  Let $Q = \{q_1, q_2, \dots, q_t\}$.
  For $\eta \in \F(\Hom(Q,P))\setminus \F(\Hom(Q,P-x))$, let $1~\leq~i(\eta)~\leq~t$ be the minimal index such that $x \in \eta(q_{i(\eta)})$.
  Write $\F(\Hom(Q,P))$ as a disjoint union $A \uplus B \uplus \F(\Hom(Q,P-x))$, defined as follows: for $\eta \in A \cup B$, we have $\eta \in A$ if $y \notin \eta(q_{i(\eta)})$ while $\eta \in B$ otherwise.

  There is a bijection $\phi: A \rightarrow B$ which adds $y$ to $\eta(q_{i(\eta)})$ without changing the other values of $\eta$.
  Adding $y$ to $\eta(q_{i(\eta)})$ yields an element in $\F(\Hom(Q,P))$ since $P-x$ is a fold of $P$.
  Clearly, $\phi(\alpha)$ covers $\alpha$ for all $\alpha \in A$.
  We take the set $\{(\alpha, \phi(\alpha)) \ | \ \alpha \in A\}$ to be our collection of the elementary collapses.
  These are ordered lexicographically after the pairs of integers $(i(\alpha),- \mathrm{dim} \ \alpha)$.

  Let us see that these collapses can be performed in this lexicographic order.
  Take $\eta > \alpha$, $\eta \neq \phi(\alpha)$.
  Assume $i(\eta) = i(\alpha)$.
  If $\eta \in B$, then $\eta = \phi(\tilde{\alpha})$, $i(\tilde{\alpha}) = i(\alpha)$, and dim $\tilde{\alpha} > \mathrm{dim} \ \alpha$.
  Otherwise $\eta \in A$ and dim $\eta > \mathrm{dim} \ \alpha$.
  The third possibility is that $i(\eta) < i(\alpha)$.
  In either case, $\eta$ has been removed before $\alpha$.
\end{proof}

Consequently, certain poset elements with an appropriate neighborhood property can be ignored when examining $\Hom(P,Q)$.


\section{Complexes of maximal chains in a graded poset}\label{sec:maxchains}

We now turn our attention to $\Hom(Q,P)$ where $Q$ is a chain. 
There are two motivations for this.
First, from the perspective of folds discussed in the previous section, chains are primitive objects since they admit no folds.
Second, for a graded poset $P$, the set of maximal chains is known to play multiple important roles.
For example, the facets of the order complex of $P$ are precisely the maximal chains in $P$, and techniques such as EL-labelings frequently require a detailed study of maximal chains in $P$.
Thus, the rest of this paper will focus on this case, starting with the following definition.

\begin{definition}
  Let $C_m$ denote the chain $0<1<2<\cdots <m$ of rank $m$.
  Let $P$ be a graded poset of rank $m$, and define
  \[
    \Hom(P):=\Hom(C_m,P) \, .
  \]
\end{definition}

Thus, a vertex of $\Hom(P)$ corresponds to a maximal chain in $P$ realized as a strict map from $C_m$ to $P$.
Given a multihomomorphism $\eta$ corresponding to a cell in $\Hom(P)$, we will represent $\eta$ as the vector with set-valued entries $(\eta(0),\eta(1),\eta(2),\ldots,\eta(m))$, as we did in Example~\ref{ex:chainbooleanhom}.
In the case that $P$ is a graded lattice, we must have that $\eta(0)=\{\widehat{0}_P\}$ and $\eta(m)=\{\widehat{1}_P\}$.


\begin{example}\label{ex:b6}
  In $\Hom(B_6)$, we can construct the multihomomorphism 
  \[
    \eta = (\emptyset,\{1,2\},12,\{123,124\},1234,\{12345,12346\},123456) \, ,
  \]
  which corresponds to a three-dimensional cell that is the product of three edges, hence a cube.
\end{example}



\subsection{Distributive lattices}

For a finite poset $P$, let $J(P)$ denote the lattice of lower order ideals in $P$.
Recall Birkoff's foundational result~\cite[Theorem 17.3]{BirkhoffDistributive} that every finite distributive lattice $L$ is isomorphic to $J(P)$ for some poset $P$.
We will assume in this paper that every finite distributive lattice $L$ is presented as $L=J(P)$ for some $P$.

For such an $L$, a maximal chain $c$ is of the form
\[
  c = \{\emptyset = I_0\subset I_1\subset I_2\subset \cdots \subset I_{|P|}=P\}\, ,
\]
where for each $j$ we have  $I_j\subseteq P$, $|I_j|=j$, and there exists some $a_j\in P$ such that $I_j\setminus I_{j-1}=\{a_j\}$.
As a result, we can represent $c$ as the permutation in $\mathfrak{S}_P$ given by
\[
  c = a_1a_2a_3\cdots a_{|P|}.
\]

\begin{theorem}\label{thm:distcubical}
  Let $L=J(P)$ be a distributive lattice.
  If $\eta\in \Hom(L)$, then $|\eta(j)|$ is equal to either $1$ or $2$ for all $j$.
  Furthermore, if $|\eta(j)|=2$, then $|\eta(j-1)|=1$ when $j\geq 1$ and $|\eta(j+1)|=1$ when $j\leq m-1$.
  Hence, $\Hom(L)$ is a cubical complex.
\end{theorem}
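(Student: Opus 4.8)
The plan is to extract the combinatorial shape of a multihomomorphism $\eta \in \Hom(L)$ from the grading of $L$, reduce both assertions to an elementary ``diamond'' property of the ideal lattice $J(P)$, and then observe that the resulting cells are combinatorial cubes. First I would record the basic shape of $\eta$. Since $C_m$ has rank $m$ and $L$ is graded of rank $m$, any map $\gamma \colon C_m \to L$ selected from $\eta$ (i.e.\ with $\gamma(k) \in \eta(k)$ for all $k$, which exists because every $\eta(k)$ is nonempty) is a strictly increasing chain of length $m+1$, hence a maximal chain of $L$ with $\gamma(k)$ of rank exactly $k$. Consequently every element of $\eta(j)$ is an order ideal of $P$ of cardinality $j$, and in particular $\eta(0) = \{\widehat{0}_L\} = \{\emptyset\}$ and $\eta(m) = \{\widehat{1}_L\} = \{P\}$. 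Moreover, for any $I \in \eta(j-1)$ and any $I' \in \eta(j)$, completing the pair $(I,I')$ to a full selection of $\eta$ and applying the multihomomorphism condition shows $I \lessdot I'$ in $L$; that is, $I \subsetneq I'$ with $|I'| = |I| + 1$.

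Next I would prove the ``furthermore'' clause. Suppose $|\eta(j)| \ge 2$, say $I, I' \in \eta(j)$ with $I \ne I'$. If $j \ge 1$, then for every $a \in \eta(j-1)$ we have $a \subsetneq I$ and $a \subsetneq I'$ with $|a| = j - 1$; since $I$ and $I'$ have equal cardinality and are distinct, $I \cap I' \subsetneq I$, so $|I \cap I'| = j-1$, which forces $a = I \cap I'$. As $a$ was an arbitrary element of $\eta(j-1)$, this gives $|\eta(j-1)| = 1$. The symmetric argument, applied to $b \in \eta(j+1)$ when $j \le m-1$, uses $I \subsetneq b$, $I' \subsetneq b$ and $|b| = j+1$ to force $b = I \cup I'$, hence $|\eta(j+1)| = 1$.

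The cardinality bound then follows quickly: for $j \in \{0,m\}$ we already know $|\eta(j)| = 1$, and for $1 \le j \le m-1$ either $|\eta(j)| = 1$, or $|\eta(j)| \ge 2$ and by the previous paragraph $\eta(j-1) = \{a\}$ and $\eta(j+1) = \{b\}$ are singletons. In the latter case every $I \in \eta(j)$ satisfies $a \subsetneq I \subsetneq b$, so $a \subsetneq b$ and $|b \setminus a| = (j+1) - (j-1) = 2$; writing $I = a \cup \{x\}$ with $x \in b \setminus a$ leaves at most two possibilities for $I$, whence $|\eta(j)| \le 2$. Thus $|\eta(j)| \in \{1,2\}$ for all $j$, and no two consecutive coordinates can both equal $2$. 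Finally, each cell of $\Hom(L)$ is the product $\prod_{j=0}^{m} \Delta_{\eta(j)}$ in which every factor $\Delta_{\eta(j)}$ is a point or a $1$-simplex; such a product is a combinatorial cube, and since every face of a cube is a cube, $\Hom(L)$ is a cubical complex.

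I expect the only real subtlety to be bookkeeping: making sure the multihomomorphism condition is invoked correctly to manufacture the covering relations $I \lessdot I'$, $a \lessdot I$, and $I \lessdot b$ for \emph{arbitrary} pairs of elements in adjacent coordinates (this is where completing a partial selection to a total one, using nonemptiness of each $\eta(k)$, is essential), and handling the boundary coordinates $j = 0$ and $j = m$ separately. The mathematical heart --- that between a rank-$(j-1)$ ideal and a rank-$(j+1)$ ideal there lie at most two rank-$j$ ideals --- is immediate from counting the two-element set difference, so there is no hard analytic step.
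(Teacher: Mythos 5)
Your proof is correct and follows essentially the same route as the paper's: use the multihomomorphism condition to see that any two distinct ideals in $\eta(j)$ intersect in the unique element of $\eta(j-1)$ and union to the unique element of $\eta(j+1)$, then count to get $|\eta(j)|\le 2$. You merely reorder the steps (singleton claims first, then the cardinality bound via the two-element difference $b\setminus a$, versus the paper's union-cardinality contradiction for $k>2$), and you are somewhat more careful than the paper in justifying why selections from $\eta$ yield maximal chains hitting each rank.
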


\begin{proof}
  Suppose $\eta \in \Hom(L)$ with $\eta(j)=\{I_j^1,I_j^2,\ldots,I_j^k\}$.
  Setting $I_j^1\cap I_j^2=I$, we must have $|I|=j-1$, and thus $\eta(j-1)=\{I\}$ with $I\subset I_j^i$ for all $i$ in order for the vertices encoded by $\eta$ to be maximal chains in $L$.
  This implies that if $|\eta(j)|=k>2$, any $J\in \eta(j+1)$ satisfies $j+2\leq |\cup_{i=1}^kI_j^i|=|\bigvee_{i=1}^kI_j^i|\leq |J|=j+1$, a contradiction.
  Next, assume that $\eta(j)=\{I_j,I_j'\}$.
  As before, if $I_j\cap I_j'=I$ then we have $\eta(j-1)=\{I\}$.
  Thus, there exist $a_j,a_j'\in P$ such that $I_j\setminus I=\{a_j\}$ and $I_j'\setminus I=\{a_j'\}$, and again by maximality of the chains encoded by $\eta$ we must have that $\eta(j+1)=\{I_j\cup I_j'\}=\{I\cup \{a_j,a_j'\}\}$.
  Finally, since $\eta(j)$ has size $1$ or $2$ for all $j$, the cell in $\Hom(L)$ corresponding to $\eta$ is a product of points and edges, and thus $\Hom(L)$ is a cubical complex.
\end{proof}

The proof of Theorem~\ref{thm:distcubical} inspires the following useful notation to encode cells $\eta\in \Hom(L)$.
Let $\veps$ be a linear extension of $P$, i,e. a total order that is compatible with the partial order for $P$.
We encode $\eta$ as a parenthesized permutation $c_\eta$ where the $j$-th element in $c_\eta$ is $a_j$ if $\eta(j)=\{I_j\}$ and $\eta(j-1)=\{I_{j-1}\}$ with $I_j\setminus I_{j-1}=\{a_j\}$, and the $j$-th and $j+1$-st element of $c_\eta$ are $a_j$ and $a_j'$, enclosed in parentheses, if $\eta(j)=\{I_j,I_j'\}$ and $\eta(j-1)=\{I_{j-1}\}$ with $I_j\setminus I_{j-1}=\{a_j\}$ and $I_j'\setminus I_{j-1}=\{a_j'\}$.
We adopt the convention that any pair of elements contained in parentheses are listed in descending order with respect to $\veps$.
For example, given the cell from Example~\ref{ex:b6} with $\eta = (\emptyset,\{1,2\},12,\{123,124\},1234,\{12345,12346\},123456)$, the corresponding parenthesized permutation is
\[
  c_\eta=(21)(43)(65) \, .
\]

\begin{definition}
  Elements of $c_\eta$ that are not enclosed in parentheses in the permutation representation of $\eta$ are called \emph{free}, while any parenthesized elements are \emph{joined}.
\end{definition}

\begin{figure}
  \centering
  \begin{tikzpicture} [scale=0.25]
    \coordinate (1234) at (15,0);
    \coordinate (1243) at (20,1.25);
    \coordinate (1324) at (16,5);
    \coordinate (1342) at (21,10);
    \coordinate (1423) at (25,6.5);
    \coordinate (1432) at (26,11);
    \coordinate (2134) at (7,1.25);
    \coordinate (2143) at (13,2.25);
    \coordinate (2314) at (2,6.5);
    \coordinate (2341) at (1,12);
    \coordinate (2413) at (12,8);
    \coordinate (2431) at (7,13);
    \coordinate (3124) at (11,10);
    \coordinate (3142) at (16,15);
    \coordinate (3214) at (3,11.25);
    \coordinate (3241) at (2,17);
    \coordinate (3412) at (15,21.75);
    \coordinate (3421) at (7,23);
    \coordinate (4123) at (24,11.5);
    \coordinate (4132) at (25,17);
    \coordinate (4213) at (17,13); 
    \coordinate (4312) at (20,23);
    \coordinate (4231) at (12,18);
    \coordinate (4321) at (13,24);
    \draw[fill=lightgray] (1234)--(1243)--(2143)--(2134)--cycle;
    \draw[fill=lightgray] (4231)--(4213)--(2413)--(2431)--cycle;
    \draw[fill=lightgray] (1423)--(1432)--(4132)--(4123)--cycle;
    \draw [] (1234)--(1243)--(1423)--(1432)--(1342)--(1324)--cycle;
    \draw [] (1234)--(1324)--(3124)--(3214)--(2314)--(2134)--cycle;
    \draw [] (2134)--(2143)--(2413)--(2431)--(2341)--(2314)--cycle;
    \draw [] (1243)--(2143)--(2413)--(4213)--(4123)--(1423)--cycle;
    \draw [] (3142)--(3124)--(3214)--(3241)--(3421)--(3412)--cycle;
    \draw [] (3142)--(1342)--(1432)--(4132)--(4312)--(3412)--cycle;
    \draw [] (4231)--(2431)--(2341)--(3241)--(3421)--(4321)--cycle;
    \draw [] (4321)--(4312)--(4132)--(4123)--(4213)--(4231)--cycle;
    \draw[fill=lightgray] (3421)--(4321)--(4312)--(3412)--cycle;
    \draw[fill=lightgray] (3241)--(3214)--(2314)--(2341)--cycle;
    \draw[fill=lightgray] (1324)--(1342)--(3142)--(3124)--cycle;
    \node [below] at (1234) {\small 1234};
    \node [below] at (1243) {\small 1243};
    \node [right] at (1324) {\small 1324};
    \node [below, xshift=7pt] at (1342) {\small 1342};
    \node [right] at (1423) {\small 1423};
    \node [right] at (1432) {\small 1432};
    \node [below] at (2134) {\small 2134};
    \node [above left] at (2143) {\small 2143};
    \node [left] at (2314) {\small 2314};
    \node [left] at (2341) {\small 2341};
    \node [left] at (2413) {\small 2413};
    \node [above left] at (2431) {\small 2431};
    \node [right] at (3124) {\small 3124};
    \node [right] at (3142) {\small 3142};
    \node [xshift=11pt, yshift=-10pt] at (3214) {\small 3214};
    \node [left] at (3241) {\small 3241};
    \node [below right] at (3412) {\small 3412};
    \node [above] at (3421) {\small 3421};
    \node [xshift=-12pt, yshift=10pt] at (4123) {\small 4123};
    \node [right] at (4132) {\small 4132};
    \node [left] at (4231) {\small 4231};
    \node [right] at (4312) {\small 4312};
    \node [above] at (4321) {\small 4321};
  \end{tikzpicture}
  \caption{$\Hom(B_4)$}
  \label{fig:perm4}
\end{figure}

\begin{example}\label{ex:booleancells}
  Let $B_n$ denote the Boolean algebra on $n$ elements, i.e the power set of $[n]$ ordered by set inclusion.
  Note that $B_n$ is isomorphic to the product of $n$ copies of $C_1$, a chain with two elements.
	Also note that $B_n=J([n])$ where $[n]$ indicates an antichain with $n$ elements.
  Thus, the set of vertices of $\Hom(B_n)$ corresponds exactly to the elements of $\mathfrak{S}_n$.
  Two vertices in $\Hom(B_n)$ are endpoints of a common edge in $\Hom(B_n)$ if they differ by a single transposition of adjacent entries, that is $\{ \sigma, \tau \}\subseteq \mathfrak{S}_n$ form a 1-cell in $\Hom(B_n)$ if $\sigma_i = \tau_{i+1}$ and $\sigma_{i+1} = \tau_i$ for some $1 \leq i \leq n-1$ while $\sigma_j = \tau_j$ for $j \notin \{i, i+1\}$.
  Observe that this implies that $\Hom(B_n)$ lies in the boundary of the permutohedron of order $n$, see Figure~\ref{fig:perm4}.
  Without loss of generality, assume that $\sigma$ is lexicographically earlier than $\tau$ under the usual ordering of the positive integers.
  Then, we can denote the above transposition by $(\sigma_{i+1} \sigma_i)$, written in decreasing order following our conventions.
  We denote the corresponding  1-cell as
  \[
    \sigma_1 \cdots \sigma_{i-1} (\sigma_{i+1} \sigma_i) \sigma_{i+2} \cdots \sigma_n \, .
  \]
  For example, the homomorphisms given by permutations $35142$ and $31542$ are adjacent in $\Hom(B_5)$ via the transposition $(51)$, and so the corresponding $1$-cell (where on the left we write a parenthesized permutation, and on the right we denote a vector of subsets) is
  \[
    3(51)42 = (\emptyset,3,\{13,35\},135,1345,12345) \, .
  \]
  The elements $2$, $3$, and $4$ are free while and $1$ and $5$ are joined.
  As another example, the expression $(64)5(32)(71)$ denotes the 3-cell in $\Hom(B_7)$ with vertex set the homomorphisms given by the permutations
  $$\{ 4652317, 4652371, 4653217, 4653271, 6452317, 6452371, 6453217, 6453271 \}.$$
  The only free element in this example is $5$, while each pair of $6$ and $4$, $3$ and $2$, and $7$ and $1$ are joined.
\end{example}

\begin{remark}
  It follows from Example~\ref{ex:booleancells} that $\Hom(B_n)$ is isomorphic to the subcomplex of the boundary of the permutahedron formed by all cubical faces.
  A generalization of this complex has been previous studied by Severs and White~\cite{severswhite}, and their paper contains proofs of the optimal discrete Morse matchings in this paper for the case of $\Hom(B_n)$, though in a very different language.
\end{remark}


\subsection{Incidence numbers in $\Hom(L)$}

Recall that a general homomorphism complex arises as a subcomplex of $P(A,B)=\prod_{i\in A}\Delta_B$ for some finite sets $A$ and $B$, which we can (after linearly ordering and renaming the elements in $A$) write as $P(A,B)=\prod_{i=1}^m\Delta_B$.
If we assume that the elements of $B$ are endowed with a total order, then we have $P(A,B)$ is a product of oriented simplices.
Thus, the cellular chain complex for $P(A,B)$ is obtained as a tensor product of copies of the chain complex for $\Delta_B$~\cite[Section 3B]{Hatcher}, which allows us to determine the incidence numbers as follows.
Let $\eta \in P(A,B)$ and $t\in \{1,\ldots,m\}$ with $|\eta(t)|\geq 2$.
For $\eta(t)=\{v_0^t,\dots,v^t_{|\eta(t)|}\}$ and $0\leq \ell \leq |\eta(t)|$, consider the face of $\eta$ given by
\[
  \tau = (\eta(1),\ldots,\eta(t-1),\eta(t)\setminus\{v_\ell^t\},\eta(t+1),\ldots,\eta(m)) \, ,
\]
for which the incidence number is obtained via the expression
\begin{equation}\label{eqn:incidence}
  [\tau:\eta]=(-1)^{t-1+\ell+\sum_{j=1}^{t-1}|\eta(j)|} \, .
\end{equation}
The cellular boundary map $\partial$ for $P(A,B)$ is defined as
\[
  \partial(\eta)=\sum_{\tau\text{ a facet of }\eta}[\tau:\eta]\tau \, .
\]
In the case of a distributive lattice $L=J(P)$, we want to be able to determine the incidence numbers using our representation $c_\eta$ of a cell $\eta\in \Hom(L)$.
The complex $\Hom(L)$ arises as a subcomplex of $\prod_{t=1}^{|P|}\Delta_{2^P}$, where $2^P$ denotes the set of all subsets of $P$.
For a linear extension $\veps$ of $P$, we list the elements of $2^P$ in graded lexicographic order.
Let $c_\eta=w(\beta\alpha)u$ where $w$ and $u$ are parenthesized permutations such that $w$ contains $t-1$ joined pairs, and $\beta$ and $\alpha$ are a joined pair in $c_\eta$.
Defining $c_\eta^\alpha=w\alpha\beta u$ and $c_\eta^\beta=w\beta\alpha u$, it follows from~\eqref{eqn:incidence} that
\begin{equation}\label{eqn:pairincidence}
  [c_\eta^\alpha:c_\eta]=(-1)^{t-1} \text{ and }[c_\eta^\beta:c_\eta]=(-1)^{t} \, .
\end{equation}

\begin{example}
  In $\Hom(B_7)$, the parenthesized permutation $c_\eta=(64)5(32)(71)$ corresponds to the cell
  \[
    \eta=(\emptyset,\{4,6\},46,456,\{2456,3456\},23456,\{123456,234567\},1234567) \, .
  \]
  If $(\beta\alpha)=(32)$, then $c_\eta^\beta=(64)532(71)$ corresponds to the cell
  \[
    \tau=(\emptyset,\{4,6\},46,456,3456,23456,\{123456,234567\},1234567) \, .
  \]
  Thus, $[\tau:\eta]=[c_\eta^\beta:c_\eta]=-1$.
  A similar analysis shows that $[c_\eta^\alpha:c_\eta]=1$.
\end{example}


\section{Products of Chains}\label{sec:productsofchains}

For the remainder of this paper, we turn our attention to those distributive lattices $L$ that arise as products of chains.
For $1\leq i_1\leq i_2\leq \cdots \leq i_n$ integers, we define $\bi=(i_1,\ldots,i_n)$ and 
\[
  \Hom(\bi):=\Hom\left(\prod_{j=1}^nC_{i_j}\right) \, .
\]
Note that when $i_j=1$ for all $j$, we have that $\prod_{j=1}^n C_{1}$ is isomorphic to $B_n$.
Our main result in this section is the following.

\begin{theorem}\label{thm:mainchain}
  For $\bi=(i_1,\ldots,i_n)$, there exists an optimal discrete Morse matching on the face poset of $\Hom(\bi)$ where the incidence number in the resulting Morse complex for any pair of critical cells is zero.
  Thus, the homology groups of $\Hom(\bi)$ are free and the rank of $H_k(\Hom(\bi);\ZZ)$ is equal to the number of critical cells of dimension $k$ in the matching.
  Furthermore, the critical cells in the matching are in bijection with permutations $w$ of the multiset $\{1^{i_1},2^{i_2},\ldots,n^{i_n}\}$ whose descent set $\Des(w)$ can be written as a disjoint union
  \begin{equation}\label{eqn:desunion}
    \Des(w) = \biguplus_{t} \{m_t,m_t+1,\ldots,m_t+q_t\}
  \end{equation}
  where each $q_t$ is of the form $3j+1$ or $3j+2$ and $\Des(w) \cap \bigcup_t \{m_t-1, m_t+q_t+1\} = \emptyset$.
  The dimension of the critical cell corresponding to $w$ is 
  \[
    \sum_t\left\lceil \frac{q_t}{3} \right\rceil \, .
  \]
\end{theorem}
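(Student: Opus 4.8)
The plan is to build the matching in stages using the Patchwork Theorem (Theorem~\ref{patchwork}) and the parenthesized-permutation encoding $c_\eta$ developed after Theorem~\ref{thm:distcubical}. First I would fix a linear extension $\veps$ of $\prod_j C_{i_j}$ and observe that the vertices of $\Hom(\bi)$ are exactly the permutations of the multiset $\{1^{i_1},\ldots,n^{i_n}\}$ that arise as linear extensions of the poset $\bigsqcup_j C_{i_j-1}$ of join-irreducibles, with a cell $\eta$ recorded by $c_\eta$ as a word in which certain disjoint adjacent pairs are parenthesized (``joined''), subject to the constraint from Theorem~\ref{thm:distcubical} that joined pairs cannot be adjacent to one another. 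The dimension of the cell is the number of joined pairs. The first reduction is an ``internal folding'' argument: within the face poset I would set up an acyclic matching that collapses away all cells $c_\eta$ whose underlying permutation (forgetting parentheses) has a descent at a position \emph{not} forced to be matched — more precisely, I want to reduce to cells supported on the positions of the descent set, so that after this stage each surviving critical cell is determined by a permutation $w$ together with a choice of how to parenthesize \emph{maximal runs of consecutive descents} of $w$. This is where the hypothesis about $\Des(w)$ decomposing into blocks $\{m_t,\ldots,m_t+q_t\}$ enters: a block of $q_t+1$ consecutive descents is an interval on which the permutation is strictly decreasing, and within such an interval the allowed parenthesizations are exactly the ways to tile a path of $q_t+1$ edges by disjoint single edges and isolated vertices with no two chosen edges adjacent — i.e. independent edge sets in a path.

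The heart of the argument is then a local analysis on each descent block, reducing via the Patchwork Theorem to an acyclic matching on the subposet of parenthesizations of a single decreasing run of length $q_t+1$. I would model this subposet as the face poset of the complex of ``matchings'' (independent edge sets) of a path $P_{q_t+2}$ on $q_t+2$ vertices, with incidence numbers given by \eqref{eqn:pairincidence}. The key combinatorial claim is that this matching complex has an acyclic Morse matching with a single critical cell precisely when $q_t \equiv 1$ or $2 \pmod 3$, that critical cell having dimension $\lceil q_t/3\rceil$, and no critical cell at all (a complete/perfect matching of the face poset) when $q_t\equiv 0\pmod 3$. This is a known fact about matching complexes of paths: $M(P_n)$ is contractible or has the homotopy type of a sphere of the stated dimension according to $n \bmod 3$ (this is essentially Kozlov's computation of matching complexes of paths), and one can write down an explicit optimal Morse matching by a ``rightmost edge'' or ``greedy from the left'' rule. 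I would produce this matching explicitly so that the incidence-number computation can be carried out. Multiplying across blocks via the tensor/product structure of the chain complex, the critical cells of the global matching are exactly the permutations $w$ whose descent blocks all have length $\equiv 1,2 \pmod 3$, i.e. satisfy \eqref{eqn:desunion}, with dimension $\sum_t \lceil q_t/3\rceil$, giving the stated bijection and dimension count.

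Finally I would address optimality and the torsion-free conclusion. For the incidence numbers between critical cells, the point is that in Theorem~\ref{thm:dmt}(b) every alternating path between two critical cells stays within a single descent block's subposet (the Patchwork structure confines it), so $[\tau:\sigma]$ reduces to a computation inside one matching complex of a path; here I would show by the explicit form of the Morse matching together with the signs \eqref{eqn:pairincidence} that $w(c)$ contributions cancel in pairs, forcing $[\tau:\sigma]=0$ — equivalently, that the Morse differential on $M(P_n)$ vanishes, which follows because the homology of a sphere is free of rank one in a single degree and our cell count already matches the Betti number. This last step — verifying the cancellation of alternating-path weights rather than merely counting critical cells — is the main obstacle, since it requires understanding all alternating paths in the matching complex of a path and their weights, not just the critical cells; I expect to handle it by choosing the Morse matching carefully enough (e.g. the greedy-from-the-left matching) that the gradient paths are essentially unique or come in obviously cancelling pairs. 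Optimality of the matching then follows immediately: the number of critical $k$-cells equals $\mathrm{rank}\, H_k(\Hom(\bi);\ZZ)$, which is a lower bound for any Morse matching (by weak Morse inequalities), so no matching can do better.
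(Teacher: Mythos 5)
Your construction of the matching itself is a legitimate alternative to the paper's: instead of the paper's iterated order-preserving maps $\phi_{r,s}$ (position of the $s$-th occurrence of $r$) and $\rho_{r,s}$, you patchwork over the map sending a cell to its underlying word (order-preserving into the weak order, since releasing a joined pair in increasing order removes an inversion), and within each fiber the parenthesizations of a maximal descent run of length $q_t+1$ do form the face poset of the matching complex of a path, whose greedy acyclic matching leaves no critical cell when $q_t\equiv 0\pmod 3$ and one critical cell of dimension $\lceil q_t/3\rceil$ otherwise. That reproduces the bijection in \eqref{eqn:desunion} and the dimension count, and it is arguably cleaner than the paper's Algorithm.

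The genuine gap is in your optimality argument. Your key claim --- that every alternating path between two critical cells ``stays within a single descent block's subposet'' because ``the Patchwork structure confines it'' --- is false. The Patchwork Theorem guarantees acyclicity of the union of fiberwise matchings, but it places no constraint on gradient paths between critical cells: an alternating path descends to an arbitrary facet before ascending via the matching, and descending by releasing a joined pair $(\beta\alpha)$ in the order $\alpha\beta$ changes the underlying word, leaving the fiber entirely. Worse, two critical cells $\sigma,\tau$ in adjacent dimensions generically have \emph{different} underlying words, so $[\tau:\sigma]$ cannot even be posed inside one fiber or one path complex. This is exactly what the paper's optimality proof grapples with: it decomposes each alternating path into a maximal terminal run of ``swaps'' (exchanges of adjacent free entries, each of which changes the underlying word) and builds a sign-reversing involution by releasing the distinguished joined pair in $u(a_j)$ in the opposite order, using \eqref{eqn:pairincidence} to see that each swap contributes $-1$ and that $c$ and $c'$ have lengths differing by one. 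Your fallback --- that the Morse differential vanishes because each local piece is a sphere or contractible and ``our cell count already matches the Betti number'' --- is circular: the homology of $\Hom(\bi)$ is precisely what is being computed, and local homotopy types of the fibers do not determine the global Morse differential. Without an analysis of the cross-fiber alternating paths (the swaps), the torsion-freeness and optimality conclusions are unproved.
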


The remaining three subsections of this paper contain the proof of Theorem~\ref{thm:mainchain}.
We will prove the theorem in three parts: first we will define a partial matching and prove it is acyclic, second we will prove that the claimed critical cell bijection holds, and third we will prove that our acyclic partial matching is optimal.
Before proceeding to the proofs for each of these parts, we introduce notation that we will require.

Note that $\prod_{j=1}^n C_{i_j}=J\left(\biguplus_{j=1}^nC_{i_j-1}\right)$.
If we were to use our earlier convention, we would set $P=\biguplus_{j=1}^nC_{i_j-1}$ and consider a linear extension of the elements of $P$ in order to write our parenthesized words for cells.
However, in the special case of $\biguplus_{j=1}^nC_{i_j-1}$, we can use a simplified representation of our parenthesized words.
Observe that every strict poset map $\eta$ from $C_{\sum_{j=1}^ni_j}$ into $\prod_{j=1}^n C_{i_j}$ corresponds to a word $\tilde{c}_\eta$ that is a permutation of the multiset $\{ 1^{i_1}, 2^{i_2}, \dots, n^{i_n} \}$, as follows.
If $\eta(j)=I_j$, $\eta(j-1)=I_{j-1}$, and $I_j\setminus I_{j-1}=\{a_j\}\subset C_{i_t-1}$, then we set the $j$-th element of the permutation $\tilde{c}_{\eta}$ equal to $t$ instead of $a_j$.
Because each $C_{i_j}$ is a total order, we can recover $c_\eta$ from $\tilde{c}_\eta$ by counting the number of occurrences of $t$ to the left of the occurrence in the $j$-th position of  $\tilde{c}_\eta$, from which we can recover the element $a_j$.

\begin{example}
  The permutation $\tilde{c}_\eta=123213$ corresponds to some vertex $\eta$ in $\Hom(2,2,2)$.
  If $C_2\times C_2\times C_2$ is represented as $J\left(\{r<s\}\uplus \{a<b\}\uplus \{x<y\}\right)$, then the corresponding maximal chain in $C_2\times C_2\times C_2$ is
  \[
    \eta = (\emptyset,r,ra,rax,raxb,raxbs,raxbsy) \, .
  \]
\end{example}

Two vertices $\sigma$ and $\tau$ are adjacent in $\Hom(\mathbf{i})$ if we can swap two adjacent distinct entries in the the word corresponding to $\sigma$ to get the word corresponding to $\tau$ and vice versa.
Note that the word does not change if the adjacent entries are identical.
As we did previously, we denote the 1-dimensional cell connecting $\sigma$ and $\tau$ by putting the transpositional pair of entries in a set of parentheses, with the two entries written in decreasing order.
We extend this convention to denote a $k$-dimensional cell in $\Hom(\bi)$ by a word from the multiset $\{ 1^{i_1}, 2^{i_2}, \dots, n^{i_n} \}$ with $k$ non-overlapping parenthesized pairs where the entries in each pair are distinct and written in decreasing order.

\begin{definition}Given a $k$-dimensional cell $\sigma\in \Hom(\bi)$, the word obtained by removing the parentheses from $\sigma$ but keeping all of the entries in the same order will be called the \textit{underlying word of $\sigma$}.
  For a fixed vector $\mathbf{i}$ and corresponding multiset $\{ 1^{i_1}, 2^{i_2}, \dots, n^{i_n} \}$, consider an arbitrary cell $\sigma$ and its underlying word.
  Let $r_s$ denote the $s$-th $r$ when reading the underlying word from left to right, and let $j_{r,s}$ denote the position of $r_s$.
\end{definition}

\begin{example}
  The cell $123(21)3$ is contained in $\Hom(2,2,2)$, has underlying word $123213$, and has the following $j$-values:

  \begin{tabular}{lcl}
    $j_{1,1} = 1$, the location of the first 1 & \hspace{0.5in} & $j_{1,2} = 5$, the location of the second 1 \\
    $j_{2,1} = 2$, the location of the first 2 & \hspace{0.5in} & $j_{2,2} = 4$, the location of the second 2 \\
    $j_{3,1} = 3$, the location of the first 3 & \hspace{0.5in} & $j_{3,2} = 6$, the location of the second 3 \\
  \end{tabular}
\end{example}

\subsection{The discrete Morse matching}

Let $\ell := \sum_{k=1}^{n} i_k$.
Let $C_{\ell}^{op}$ denote the chain poset on $[\ell] := \{1,2,\dots,\ell\}$ with the order $\ell<\ell-1<\cdots <2<1$, and let $M$ denote the poset of two elements $a$ and $b$ with $a < b$.
Let $\mathcal{F}(\Hom(\bi))$ denote the face poset of the complex $\Hom(\bi)$.
We define an acyclic partial matching on $\mathcal{F}(\Hom(\bi))$ according to the following algorithm.

\begin{algorithm}\label{MatchAlg}

  INITIALIZE the following: \qquad $r := n$ \qquad $s := i_n$.

  Define $\Omega_{r,s} := \mathcal{F}(\Hom(\mathbf{i}))$  and $U_{r,s} := \emptyset$.

  \bigskip

  STEP 1: Define a poset map $\phi_{r,s}$ from $\Omega_{r,s}$ to $C_{\ell}^{op}$ where $\sigma$ is sent to the position of $r_s$, that is $\phi_{r,s}(\sigma) = j_{r,s}$.
  This map is order-preserving by Claim \ref{PhiProof}.

  \bigskip

  STEP 2: Define a poset map $\rho_{r,s}$ from $\phi_{r,s}^{-1}(j_{r,s}) \subseteq \Omega_{r,s}$ to $M$ as follows:
  \begin{itemize}
  \item $\rho_{r,s}(\sigma) = a$ if the following conditions hold:
    \begin{enumerate}
    \item If $\sigma_{j_{r,s}-1}$ is free, then $r_s \geq \sigma_{j_{r,s}-1}$.  
    \item $r_s > \sigma_{j_{r,s}+1}$.  
    \item Either $r_s$ and $\sigma_{j_{r,s}+1}$ are both free or joined together.  
    \end{enumerate}
  \item $\rho_{r,s}(\sigma) = b$ otherwise.
  \end{itemize}
  This map is order-preserving by Claim \ref{RhoProof}.

  \bigskip

  STEP 3: Perfectly match the elements of $\rho_{r,s}^{-1}(a) \subseteq \phi_{r,s}^{-1}(j_{r,s}) \subseteq \Omega_{r,s}$ by pairing the cells where $r_s$ and $\sigma_{j_{r,s}+1}$ are both free with the corresponding cell where $r_s$ and $\sigma_{j_{r,s}+1}$ are joined together.
  This matching is acyclic since $r_s$ can be in at most one transposition for any given cell.

  \bigskip

  STEP 4: 
  If $s > 1$, redefine $s := s-1$.
  Now, define $\Omega_{r,s} := \rho_{r,s+1}^{-1}(b) \subseteq \phi_{r,s+1}^{-1}(j_{r,s+1}) \subseteq \Omega_{r,s+1}$ and $U_{r,s} := \{r_{s+1}\} \cup U_{r,s+1}$, then return to Step 1.

  If $s = 1$ and $r > 1$, redefine $r := r-1$ and $s := i_r$.
  Now, define $\Omega_{r,s} := \rho_{r+1,1}^{-1}(b) \subseteq \phi_{r+1,1}^{-1}(j_{r+1,1}) \subseteq \Omega_{r+1,1}$ and $U_{r,s} := \{(r+1)_{1}\} \cup U_{r+1,1}$, then return to Step 1.

  If $s = 1$ and $r = 1$, STOP.

\end{algorithm}

Before proving Claims~\ref{PhiProof} and~\ref{RhoProof} and Theorem~\ref{thm:matchalg}, we will go through an example illustrating the action of our algorithm on a specific cell $\sigma$.

\begin{example}
  Let $\sigma=(21)1(32)344\in\Hom(2,2,2,2)$.
  We begin by setting $r=4$ and $s=2$, with $\Omega_{4,2}=\mathcal{F}(\Hom(2,2,2,2))$ and $U_{4,2}=\emptyset$.
  We have that $\phi_{4,2}(\sigma)=8$, thus $\sigma\in \phi_{4,2}^{-1}(8)$.
  Since $\sigma_{8+1}=\sigma_9$ is not defined, we have $\rho_{4,2}(\sigma)=b$.

  Next we set $r=4$ and $s=1$, with $\Omega_{4,1}=\rho_{4,1}^{-1}(b)$, hence $\sigma\in \Omega_{4,1}$ and $U_{4,1}=\{4_2\}$.
  We have that $\phi_{4,1}(\sigma)=7$.
  Since $4_1=4$ is not strictly greater than $\sigma_8=4$, we have that $\rho_{4,1}(\sigma)=b$.

  Next, we set $r=3$ and $s=2$, with $\Omega_{3,2}=\rho_{4,1}^{-1}(b)$ and $U_{3,2}=\{4_1,4_2\}$.
  Then $\phi_{3,2}(\sigma)=6$ and since $3$ is not greater than $4$, we have $\rho_{3,2}(\sigma)=b$.

  Again, we set $r=3$ and $s=1$, with $\Omega_{3,1}=\rho_{3,2}^{-1}(b)$ and $U_{3,1}=\{3_2,4_1,4_2\}$.
  We have that $\phi_{3,1}(\sigma)=4$, and also we have $\rho_{3,1}(\sigma)=a$ since $\sigma_3=1\leq 3$, $3>\sigma_5=2$, and $(\sigma_4\sigma_5)=(32)$ are joined.
  In $\rho_{3,1}^{-1}(a)$, we match $(21)1(32)344$ with $(21)132344$.
\end{example}

\begin{claim}\label{PhiProof}
  The poset maps $\phi_{r,s}$ defined in Step 1 of Algorithm \ref{MatchAlg} are order-preserving within their corresponding $\Omega_{r,s}$ domains.
\end{claim}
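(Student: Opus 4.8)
The plan is to verify directly that if $\sigma \leq \tau$ in $\Omega_{r,s}$ (i.e., $\sigma$ is a face of $\tau$), then $j_{r,s}(\sigma) \leq j_{r,s}(\tau)$ in $C_\ell^{op}$, which under the reversed order means the position of $r_s$ in $\tau$ is at least as far left as in $\sigma$. First I would recall the combinatorial meaning of the face relation in $\Hom(\bi)$: passing from $\tau$ down to a face amounts to either removing a parenthesized pair entirely (replacing $(xy)$ by one of its two linear orders $xy$ or $yx$ at the same two positions) or, more precisely, $\sigma \leq \tau$ means $\sigma$ is obtained from $\tau$ by splitting some subset of $\tau$'s joined pairs into free elements in one of the two orders. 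Crucially, the underlying multiset of entries is unchanged, and the \emph{set} of positions occupied by each value $r$ is unchanged — only the left-to-right order among the two entries of a split pair can change.

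The key observation is then this: since the positions occupied by the value $r$ form the same set in $\sigma$ and in $\tau$, the $s$-th occurrence of $r$ sits in the same position \emph{unless} one of the copies of $r$ was part of a joined pair $(r\,x)$ or $(x\,r)$ in $\tau$ that got split in $\sigma$. When a pair $(r\,x)$ (written in decreasing $\veps$-order, so $r > x$) is split, the two positions it occupies are adjacent, say $p$ and $p+1$; in $\tau$ the entry $r$ is at position $p$ (the parenthesized convention puts the larger first when read left to right... or last, depending on the convention — I would pin this down against the paper's stated convention that parenthesized pairs are listed in descending order), and splitting either keeps $r$ at $p$ or moves it to $p+1$. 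Either way the position of that copy of $r$ changes by at most $1$, and one must check that this perturbation moves the $s$-th occurrence of $r$ in the correct direction (weakly left, i.e., weakly up in $C_\ell^{op}$). I would argue that because $\Omega_{r,s}$ has already been cut down so that all later values $r_{s+1}, \dots$ and all larger values are "locked" (they lie in $U_{r,s}$ and the earlier $\rho$-maps sent $\sigma$ to $b$), the only relevant splittings near $r_s$ are of pairs involving $r_s$ itself or entries to its immediate left/right, and a short case analysis on which of the at most two pairs straddling position $j_{r,s}(\tau)$ get split shows $j_{r,s}$ is order-preserving.

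The main obstacle I anticipate is bookkeeping the interaction between splitting a pair to the \emph{left} of $r_s$ versus one containing $r_s$: splitting a pair entirely to the left of $r_s$ does not move $r_s$ at all (same positions, same count of $r$'s to the left), so those are harmless; the delicate case is when $r_s$ is itself joined in $\tau$, where I must confirm that the "descending order" convention forces $r_s$ into the position that makes the inequality go the right way, and that no \emph{earlier} occurrence $r_{s'}$ with $s' < s$ can simultaneously shift in a way that changes which occurrence is the $s$-th. I would handle this by noting that occurrences of $r$ other than those in a split pair keep their absolute positions, so the count of $r$'s strictly left of a fixed position is monotone under the face relation, and conclude. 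A clean way to organize the whole argument is: fix the face relation $\sigma \lessdot \tau$ covering a single pair-split, prove the claim in that case, and extend to general $\sigma \leq \tau$ by transitivity along a chain of covers.
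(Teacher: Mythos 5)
Your overall skeleton matches the paper's proof: reduce to releasing a single joined pair, observe that releasing a pair not containing $r_s$ leaves its position unchanged, and handle the pairs containing $r_s$ by cases, using the fact that already-processed entries have locked positions. But your resolution of what you correctly flag as the delicate case does not work as stated. When $r_s$ is joined there are two subcases. If $r_s$ is the \emph{larger} member, the pair is $(r_s\,\tau_{j+1})$ with $r_s$ at position $j$, and either release puts $r_s$ at $j$ or $j+1$, i.e.\ weakly to the right --- which is what order-preservation into $C_{\ell}^{op}$ requires. (Your parenthetical ``weakly left, i.e., weakly up in $C_\ell^{op}$'' has the direction backwards: passing to a face must move $r_s$ weakly right, hence weakly \emph{down} in $C_\ell^{op}$, consistent with $\sigma\le\tau$.) If $r_s$ is the \emph{smaller} member, the pair is $(\tau_{j-1}\,r_s)$ with $r_s$ at position $j$ and its larger partner at $j-1$, and the release in the order $r_s\,\tau_{j-1}$ moves $r_s$ to position $j-1$ --- weakly left, which would break the claim. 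The descending-order convention cannot rescue you here: it only dictates where $r_s$ sits in $\tau$, not which of the two releases is taken in the face $\sigma$.

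The actual fix --- an ingredient you have on the table but never deploy for this purpose --- is that $\tau_{j-1}>r_s$ forces every occurrence of the value $\tau_{j-1}$ into $U_{r,s}$, and $\Omega_{r,s}$ is contained in the fibers $\phi_{r',s'}^{-1}(j_{r',s'})$ for all previously processed $(r',s')$, so the position of $\tau_{j-1}$ is fixed throughout $\Omega_{r,s}$. Hence the release that moves $\tau_{j-1}$ (and thus moves $r_s$ left) lands \emph{outside} $\Omega_{r,s}$ and is irrelevant, since the claim only asserts order-preservation within $\Omega_{r,s}$. You invoke the locking of larger values only to ``restrict attention to splittings near $r_s$'' and then attribute the delicate case to the parenthesization convention; as written the argument fails exactly where the claim is hardest. (Also, it is the $\phi$-fibers, not the $\rho$-fibers, that do the locking.) Your worry about the $s$-th occurrence being re-indexed is handled correctly: the two entries of a joined pair are distinct, so at most one of them is an $r$ and no other occurrence of $r$ moves.
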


\begin{proof}
  Suppose $\sigma < \tau$ in $\Omega_{r,s}$ with $\phi_{r,s}(\tau) = j \in [\ell]$.
  If we release a transposition that does not include $r_s$, the position of $r_s$ will be preserved.
  If $r_s$ is in a transposition of the form $(r_s \tau_{j+1})$, then releasing that transposition in either order will preserve the position of $r_s$ or increase its position by one.
  Suppose that $r_s$ is in a transposition of the form $(\tau_{j-1} r_s)$.
  By convention, $\tau_{j-1} > r_s$, which implies $\tau_{j-1} \in U_{r,s}$, a consequence of Step 4 in Algorithm \ref{MatchAlg}.
  By the iterative revision of $\Omega_{r,s}$ in Step 4 of Algorithm \ref{MatchAlg}, the elements of $U_{r,s}$ have prescribed positions in the underlying words of the cells in $\Omega_{r,s}$.
  Hence, the position of $\tau_{j-1}$ must be fixed in all elements of $\Omega_{r,s}$, which implies that the transposition $(\tau_{j-1} r_s)$ can only be released in the order $\tau_{j-1} r_s$ to stay within $\Omega_{r,s}$.
  So, the position of $r_s$ is preserved in this third case.
  In all three cases, $\phi_{r,s}(\sigma) \geq j$.
  Since cells with larger position values for $r_s$ map lower on $C_{\ell}^{op}$, we see that $\phi_{r,s}$ is weakly order preserving.
\end{proof}

\begin{claim}\label{RhoProof}
  The poset maps $\rho_{r,s}$ defined in Step 2 of Algorithm \ref{MatchAlg} are order-preserving within their corresponding $\phi_{r,s}^{-1}(j_{r,s}) \subseteq \Omega_{r,s}$ domains.
\end{claim}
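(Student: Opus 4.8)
Since the target poset $M=\{a<b\}$ has only two elements, the map $\rho_{r,s}$ is order-preserving on a fibre $\phi_{r,s}^{-1}(j)$ (write $j:=j_{r,s}$ throughout) if and only if $\rho_{r,s}^{-1}(a)$ is a down-set there, and for this it is enough to treat a covering pair $\sigma<\tau$ inside $\phi_{r,s}^{-1}(j)$ with $\rho_{r,s}(\tau)=a$ and to deduce $\rho_{r,s}(\sigma)=a$. The plan is to first record two consequences of $\rho_{r,s}(\tau)=a$: by condition~(3) the entry $r_s$ of $\tau$ is \emph{not} joined to $\tau_{j-1}$, and if $r_s$ is joined to $\tau_{j+1}$ then the joined pair is exactly $(r_s\,\tau_{j+1})$. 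Because $\sigma$ is covered by $\tau$ it is obtained by releasing a single transposition of $\tau$, say at positions $p,p+1$, in one of its two orders, and since $\sigma,\tau$ lie in the same fibre the position of $r_s$ is unchanged, which pins down the order of release when $p=j$.

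I would then carry out a short case analysis on $p$. If $p\notin\{j-2,j-1,j,j+1\}$, the cells $\sigma$ and $\tau$ agree at positions $j-1,j,j+1$, entries and free/joined status alike, so conditions~(1)--(3) for $\sigma$ are literally those for $\tau$. If $p=j$ then the released pair is $(r_s\,\tau_{j+1})$, necessarily released to $r_s\,\tau_{j+1}$; positions $j-1$ and $j$ are undisturbed, $\sigma_{j+1}=\tau_{j+1}$ becomes free, and~(1),(2),(3) all hold for $\sigma$. The cases $p=j+1$ and $p=j-1$ cannot occur, as they would force $\tau_{j+1}$ to be joined to $\tau_{j+2}$ (contradicting~(3) for $\tau$), respectively $\tau_{j-1}$ to be joined to $r_s$ (contradicting the first consequence above). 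This leaves only $p=j-2$: releasing a joined pair $(\tau_{j-2}\,\tau_{j-1})$ just to the left of $r_s$. Here positions $j$ and $j+1$ are untouched, so~(2) and~(3) again transfer, and the whole claim reduces to verifying condition~(1) for $\sigma$, i.e. $r_s\ge\sigma_{j-1}$, where $\sigma_{j-1}\in\{\tau_{j-1},\tau_{j-2}\}$ is now a free entry.

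The crux, and the step I expect to be the main obstacle, is this last case. If $(\tau_{j-2}\,\tau_{j-1})$ is released to $\tau_{j-1}\,\tau_{j-2}$ then $\sigma_{j-1}=\tau_{j-2}$ has shifted one position to the left; since every element of $U_{r,s}$ has a prescribed position in all cells of $\Omega_{r,s}$ (inherited through the restrictions made in Step~4 of Algorithm~\ref{MatchAlg}), this forces $\tau_{j-2}\notin U_{r,s}$, hence $\tau_{j-2}$ has value at most $r$ and $r_s\ge\tau_{j-2}$, so~(1) holds. If instead the pair is released to $\tau_{j-2}\,\tau_{j-1}$, the underlying word is unchanged, $\sigma_{j-1}=\tau_{j-1}$, and~(1) can fail only if $\tau_{j-1}$ has value strictly greater than $r$. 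I would exclude this by a leftward walk inside $\sigma$: analyzing the stage of Algorithm~\ref{MatchAlg} that processes $\tau_{j-2}$ shows that, because $\tau_{j-2}$ and $\tau_{j-1}$ are joined in $\tau$, the cell $\tau$ can survive into $\Omega_{r,s}$ only if $\tau_{j-3}$ is free with value exceeding that of $\tau_{j-2}$ (otherwise $\tau\notin\Omega_{r,s}$ already and the case is vacuous); iterating this, one walks left through free entries of strictly increasing value and terminates at an entry $\sigma_{j^*}$ whose left neighbour is joined or of no larger value. In $\sigma$ all of these entries (in particular $\tau_{j-2}$) are free, so at the stage processing $\sigma_{j^*}$ conditions~(1)--(3) all hold; hence $\rho=a$ there and Step~3 matches $\sigma$ at a stage strictly before $(r,s)$, contradicting $\sigma\in\Omega_{r,s}$. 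The real work is the bookkeeping: checking that each cell met along the walk stays in the relevant $\Omega_{r',s'}$, that the positions claimed for $r_s$, $\tau_{j-2}$, $\tau_{j-1}$ and the walk entries are consistent with the prescribed positions of $U_{r,s}$, and that $\tau$ itself is not swept up by the same matching step.
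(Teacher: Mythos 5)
Your proposal is correct, and it is in fact more thorough than the paper's own proof. The published argument makes the same reduction to a covering pair $\sigma\prec\tau$ with $\rho_{r,s}(\tau)=a$, but then only tracks the entries at positions $j$ and $j+1$: it splits on whether $r_s$ and $\tau_{j+1}$ are both free or joined, and in the joined case notes that the only release staying in the fibre is the decreasing one. It never addresses condition~(1), which is precisely the case you isolate as the crux: if the released transposition sits at positions $j-2,j-1$, then $\sigma_{j-1}$ becomes free and could a priori violate $r_s\ge\sigma_{j-1}$ even though the condition was vacuous for $\tau$. Your two-pronged treatment of that case is sound. The release that moves $\tau_{j-2}$ into position $j-1$ (a shift to the \emph{right}, not the left as you wrote --- a harmless slip) is ruled out from carrying a value exceeding $r$ by exactly the ``prescribed positions of $U_{r,s}$'' fact that the paper itself invokes in the proof of Claim~\ref{PhiProof}; and the release that leaves the underlying word unchanged is handled by your leftward walk, which is the same ``this cell would already have been matched at an earlier stage'' reasoning the paper deploys later in Claims~\ref{StartDesClaim}--\ref{j+3Claim}. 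The bookkeeping you flag does go through: the $\Omega$'s are nested, so $\sigma\in\Omega_{r,s}$ lies in every earlier $\Omega_{r'',s''}$ and automatically in the fibre over the position of each walk entry; the walk entries have strictly increasing values all exceeding that of $\tau_{j-2}>r$, so their stages strictly precede $(r,s)$; and the walk must terminate, since condition~(1) holds vacuously at position $1$ or at a joined left neighbour, at which point $\sigma$ would have been matched, contradicting $\sigma\in\Omega_{r,s}$. In short, your route buys a complete verification of condition~(1) that the paper leaves implicit; the paper's version buys only brevity.
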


\begin{proof}
  Suppose $\sigma$ is covered by $\tau$ in $\phi_{r,s}^{-1}(j_{r,s}) \in \Omega_{r,s}$ with $\rho_{r,s}(\tau) = a$.
  If both $\tau_{j_{r,s}}$ and $\tau_{j_{r,s}+1}$ are free in $\tau$, then they are both free in $\sigma$ since transpositions are inherited upward and free entries are inherited downward.
  So, $\rho_{r,s}(\sigma) = a$ here.
  Suppose instead that $\tau_{j_{r,s}}$ and $\tau_{j_{r,s}+1}$ are joined.
  If they are still joined in $\sigma$, then $\rho_{r,s}(\sigma) = a$.
  If they are not joined in $\sigma$, then they are both free in $\sigma$.
  However, there are two children of $\tau$ where the entries at positions $j_{r,s}$ and $j_{r,s}+1$ are free.
  We know that $\sigma$ must be the child where the entries are in decreasing order because the child of $\tau$ where the entries are in increasing order is in the fiber $\phi_{r,s}^{-1}(j_{r,s}+1)$, not $\phi_{r,s}^{-1}(j_{r,s})$.
  This means that $\rho_{r,s}(\sigma) = a$ here as well.
  Hence, we must have $\rho_{r,s}(\sigma) = a$, which implies that $\rho_{r,s}$ is weakly order preserving.
\end{proof}

\begin{theorem}\label{thm:matchalg}
  The matching defined by Algorithm \ref{MatchAlg} is an acyclic partial matching.
\end{theorem}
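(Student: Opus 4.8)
The plan is to recognize the output of Algorithm~\ref{MatchAlg} as a union of per-stage matchings and to establish acyclicity by feeding the two order-preserving maps of each stage into the Patchwork Theorem (Theorem~\ref{patchwork}) in a telescoped, inductive fashion; Claims~\ref{PhiProof} and~\ref{RhoProof} together with the observation recorded in Step~3 of the algorithm supply exactly the inputs this argument requires. First I would fix notation: write $M_{r,s}$ for the perfect matching on $\rho_{r,s}^{-1}(a)$ built in Step~3 at stage $(r,s)$, let $M=\biguplus_{(r,s)}M_{r,s}$ be the matching returned by the algorithm, and for each stage $(r,s)$ let $M_{r,s}^{\ast}$ be the union of the matchings $M_{r',s'}$ over all stages $(r',s')$ processed at or after $(r,s)$, so that $M_{n,i_n}^{\ast}=M$ and, whenever $(r,s)$ is not the last stage, $M_{r,s}^{\ast}=M_{r,s}\uplus M_{r',s'}^{\ast}$ with $(r',s')$ the stage immediately following $(r,s)$. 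Then I would record the skeleton of the recursion coming straight out of Step~4: the posets $\Omega_{r,s}$ form a decreasing chain under inclusion in processing order; $\Omega_{r',s'}=\rho_{r,s}^{-1}(b)$ is disjoint from the support $\rho_{r,s}^{-1}(a)$ of $M_{r,s}$; consequently the supports of the $M_{r,s}$ are pairwise disjoint, so $M$ is a genuine partial matching; and since both $\rho_{r,s}^{-1}(a)$ and $\Omega_{r',s'}=\rho_{r,s}^{-1}(b)$ are contained in $\phi_{r,s}^{-1}(j_{r,s})$, the full support of $M_{r,s}^{\ast}$ lies inside the single $\phi_{r,s}$-fiber $\phi_{r,s}^{-1}(j_{r,s})$.

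The heart of the argument is a downward induction, from the last stage $(1,1)$ back to the first stage $(n,i_n)$, proving that $M_{r,s}^{\ast}$ is an acyclic matching on $\Omega_{r,s}$. The base case is immediate from the observation recorded in Step~3: since $r_s$ lies in at most one transposition of any cell, no matched cell $u(\sigma)$ of $M_{1,1}$ is covered within $\phi_{1,1}^{-1}(j_{1,1})$ by another cell in the domain of $M_{1,1}$, so $M_{1,1}$ admits no alternating path of positive length and hence no alternating cycle. For the inductive step, assume $M_{r',s'}^{\ast}$ is acyclic on $\Omega_{r',s'}$, with $(r',s')$ following $(r,s)$. Apply the Patchwork Theorem to $\rho_{r,s}\colon\phi_{r,s}^{-1}(j_{r,s})\to M$, which is order-preserving by Claim~\ref{RhoProof}: the fiber over $a$ carries $M_{r,s}$, acyclic for the reason just given, and the fiber over $b$ equals $\Omega_{r',s'}$ and carries $M_{r',s'}^{\ast}$, acyclic by the inductive hypothesis; hence $M_{r,s}^{\ast}=M_{r,s}\uplus M_{r',s'}^{\ast}$ is acyclic on $\phi_{r,s}^{-1}(j_{r,s})$. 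Now apply the Patchwork Theorem a second time to $\phi_{r,s}\colon\Omega_{r,s}\to C_{\ell}^{op}$, which is order-preserving by Claim~\ref{PhiProof}: place the matching just obtained on the fiber $\phi_{r,s}^{-1}(j_{r,s})$ and the empty matching on every other fiber, which is legitimate precisely because the support of $M_{r,s}^{\ast}$ lies in $\phi_{r,s}^{-1}(j_{r,s})$; the conclusion is that $M_{r,s}^{\ast}$ is acyclic on $\Omega_{r,s}$. Evaluating the induction at $(r,s)=(n,i_n)$, where $\Omega_{n,i_n}=\F(\Hom(\bi))$ and $M_{n,i_n}^{\ast}=M$, yields the theorem.

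The substantive work --- order-preservation of the $\phi_{r,s}$ and $\rho_{r,s}$, and acyclicity of the Step~3 matching --- is already done, so I expect the main obstacle to be organizational: checking that the fibers nest exactly as Step~4 dictates so that the two Patchwork applications genuinely telescope, that the per-stage supports are disjoint, and that the acyclicity of $M_{r',s'}^{\ast}$ established at one stage transfers unchanged when the ambient poset shrinks from $\Omega_{r',s'}$ to a fiber of the next stage. This last point requires only noticing that every matched pair and every incidence $a_{i+1}\prec u(a_i)$ occurring in an alternating path differs in dimension by exactly one, so the cover relations relevant to acyclicity coincide in $\Omega_{r,s}$, in $\phi_{r,s}^{-1}(j_{r,s})$, and in $\Omega_{r',s'}$; with that in place the argument closes with no further computation.
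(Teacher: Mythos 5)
Your proposal is correct and follows the same route as the paper, whose entire proof is the one-line observation that the claim ``follows from the Patchwork Theorem since the sub-matchings defined in Step 3 are each acyclic.'' Your telescoped induction simply makes explicit the nested applications of Theorem~\ref{patchwork} via $\phi_{r,s}$ and $\rho_{r,s}$ that the paper leaves implicit, and the details you supply (disjointness of supports, containment of each stage's matching in a single $\phi_{r,s}$-fiber, acyclicity of each $M_{r,s}$) are exactly the right ones.
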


\begin{proof}
  The claim follows from the Patchwork Theorem since the sub-matchings defined in Step 3 are each acyclic.
\end{proof}


\subsection{The critical cell bijection}

Let $\sigma$ denote an arbitrary critical $k$-cell in the reduced complex of $\Hom(\mathbf{i})$, let $w$ denote its underlying word which is a permutation of the multiset $\{1^{i_1},2^{i_2},\ldots,n^{i_n}\}$.
Define
\[
  M := \{j \in \Des(w) : j-1 \notin \Des(w)\} \,
\]
and let $m_1, m_2, \dots, m_s$ denote the elements of $M$ in increasing order where $s = |M|$.
We first prove the following three claims to be used in the main bijection proof.

\begin{claim}\label{StartDesClaim}
  If $j \in M$, then $\sigma_j$ is free.
\end{claim}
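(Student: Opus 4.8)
The plan is to argue by contradiction: suppose $\sigma$ is critical but $\sigma_j$ is joined for some $j \in M$, and show that this contradicts the unmatched status of $\sigma$. The key observation is that $j \in M$ means $j \in \Des(w)$ but $j-1 \notin \Des(w)$, so in the underlying word $w$ we have $w_{j-1} \leq w_j$ (where $\leq$ is the relevant chain comparison, with equality meaning the same symbol appears) and $w_j > w_{j+1}$. I will trace through where the pair $\{j, j+1\}$ gets processed by Algorithm~\ref{MatchAlg}, i.e., at the stage $(r,s)$ for which $w_j = r$ and $j = j_{r,s}$, in other words when $r_s$ sits in position $j$.

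First I would pin down that at the stage $(r,s)$ with $\phi_{r,s}(\sigma) = j$, the cell $\sigma$ lands in the fiber $\phi_{r,s}^{-1}(j)$, so Step~2 applies and we compute $\rho_{r,s}(\sigma)$. I then check the three conditions defining $\rho_{r,s}(\sigma) = a$: condition~(2) is $r_s > \sigma_{j+1}$, which holds because $j \in \Des(w)$ forces $w_j > w_{j+1}$ as entries of the underlying word (and here $r_s = w_j$ sits in position $j$); condition~(1) says that if $\sigma_{j-1}$ is free then $r_s \geq \sigma_{j-1}$, and $j - 1 \notin \Des(w)$ gives exactly $w_{j-1} \leq w_j$, so this holds regardless; condition~(3) requires $r_s$ and $\sigma_{j+1}$ to be both free or joined together. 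Under the contradiction hypothesis $\sigma_j = r_s$ is joined; if it is joined precisely to $\sigma_{j+1}$ then all three conditions hold and $\rho_{r,s}(\sigma) = a$, so by Step~3 the cell $\sigma$ is matched — contradicting criticality. The remaining case to rule out is that $r_s$ in position $j$ is joined to its \emph{left} neighbor $\sigma_{j-1}$, i.e., the transposition is $(\sigma_{j-1}\, r_s)$ written in decreasing order, forcing $\sigma_{j-1} > r_s$; but $j - 1 \notin \Des(w)$ means $w_{j-1} \leq w_j = r_s$, which is incompatible with $\sigma_{j-1} > r_s$. Hence $r_s$ cannot be joined to the left, so the only way $\sigma_j$ is joined is to the right, and we are back in the matched case.

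The main obstacle I anticipate is bookkeeping about \emph{which} stage $(r,s)$ actually processes position $j$ and verifying that $\sigma$ is still ``live'' (i.e., lies in $\Omega_{r,s}$) at that stage rather than having been diverted earlier. This requires invoking the structure of Step~4: the cells that survive to stage $(r,s)$ are exactly those on which all earlier $\rho$-maps returned $b$, and one must check that the descent/ascent pattern around position $j$ is consistent with $\sigma$ reaching stage $(r,s)$ with $r_s$ in position $j$ — essentially that the $U_{r,s}$ constraints do not already force a contradiction or a prior match. I would handle this by noting that position $j$ is precisely $j_{r,s}$ for the unique $(r,s)$ with $w_j = r$ and $\sigma_j$ being the $s$-th copy of $r$, and that the algorithm visits every such $(r,s)$; if $\sigma$ were matched or removed at any earlier stage it would not be critical, so $\sigma \in \Omega_{r,s}$ when that stage is reached. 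A careful statement of this ``survival'' fact — perhaps isolated as a short preliminary remark — is the one place where the argument needs genuine care rather than routine checking.
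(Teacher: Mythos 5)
Your proof is correct and follows essentially the same route as the paper's: first rule out a left-join of $\sigma_j$ using the descending-pair convention together with $j-1\notin\Des(w)$, then show that a right-join forces all three conditions of Step~2, so $\rho_{r,s}(\sigma)=a$ and $\sigma$ is matched, contradicting criticality. The ``survival'' point you flag is left implicit in the paper, and your justification — the only way a cell leaves the live set $\Omega_{r,s}$ is by being matched, so an unmatched cell reaches every stage — is the right one.
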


\begin{proof}
  Since $j-1 \notin \Des(w)$ and joined pairs correspond to a descent, we conclude that $\sigma_{j-1}$ and $\sigma_j$ are not joined.
  We will show that $\sigma_j$ is not joined with $\sigma_{j+1}$.
  For the sake of contradiction, assume that $\sigma_j$ and $\sigma_{j+1}$ are joined.
  Since $\sigma_j = r_s$ for some $r \in [n]$ and $1 \leq s \leq i_r$, we must have $j = j_{r,s}$ and $\sigma \in \phi_{r,s}^{-1}(j_{r,s}) \in \Omega_{r,s}$.
  Since $j-1 \notin \Des(w)$, the conclusion of Condition (1) in Step 2 of Algorithm~\ref{MatchAlg} holds.
  Since $j-1 \notin \Des(w)$ and joined pairs correspond to a descent, Conditions (2) and (3) in Step 2 of Algorithm~\ref{MatchAlg} hold as well.
  Hence, $\rho_{r,s}(\sigma) = a$, and $\sigma$ is matched by Step 3 of Algorithm~\ref{MatchAlg} on the $r,s$ loop.
  This contradicts that $\sigma$ is a critical cell.
\end{proof}

\begin{claim}\label{j+1Claim}
  If $\sigma_j$ is free and $j \in \Des(w)$, then $\sigma_{j+1}$ is joined with $\sigma_{j+2}$, and hence $j+1 \in \Des(w)$.
\end{claim}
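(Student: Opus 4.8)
The plan is to argue by contradiction: suppose $\sigma_j$ is free, $j \in \Des(w)$, but $\sigma_{j+1}$ is \emph{not} joined with $\sigma_{j+2}$. Under this hypothesis I will show that $\sigma$ must have been matched by the algorithm on some loop, contradicting criticality. The key observation is that $\sigma_j = r_s$ for suitable $r,s$, and since $j \in \Des(w)$ we know $\sigma_j = r_s > w_{j+1} = \sigma_{j+1}$ as entries of the underlying word. So Condition (2) of Step~2 in Algorithm~\ref{MatchAlg} is automatically satisfied at position $j_{r,s} = j$.

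The heart of the argument is checking Conditions (1) and (3) and, crucially, verifying that $\sigma$ actually lies in the domain $\phi_{r,s}^{-1}(j_{r,s}) \subseteq \Omega_{r,s}$ at the moment the $(r,s)$-loop runs, so that $\rho_{r,s}(\sigma)$ is even defined. For Condition (3): by assumption $\sigma_{j+1}$ is not joined with $\sigma_{j+2}$, so either $\sigma_{j+1}$ is free (and since $\sigma_j$ is free too, $r_s$ and $\sigma_{j+1}$ are both free — Condition (3) holds with the ``both free'' alternative), or $\sigma_{j+1}$ is joined with $\sigma_j$; but $\sigma_j = r_s$ is free by hypothesis, so this case does not occur. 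Hence $r_s$ and $\sigma_{j+1}$ are both free, giving Condition (3). For Condition (1), I would invoke the structure of $\Omega_{r,s}$: the entries of $U_{r,s}$ have fixed positions, and one argues (as in the proof of Claim~\ref{PhiProof}) that if $\sigma_{j-1}$ is free then it is not forced to exceed $r_s$ — more precisely, I would show that whenever $\sigma$ reaches the $(r,s)$-loop with $\sigma_j$ free and $\sigma_{j-1}$ free, the condition $r_s \geq \sigma_{j-1}$ must hold, using that a violation $\sigma_{j-1} > r_s$ with $\sigma_{j-1}$ free would have placed $\sigma$ outside $\Omega_{r,s}$ (it would contradict the way later loops restrict to $\rho^{-1}(b)$ fibers, since $\sigma_{j-1}$ being a larger free entry just left of $r_s$ is exactly the configuration that gets matched earlier).

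Granting that $\sigma \in \phi_{r,s}^{-1}(j_{r,s}) \subseteq \Omega_{r,s}$ and all three conditions hold, we get $\rho_{r,s}(\sigma) = a$, so $\sigma$ is matched in Step~3 of the $(r,s)$-loop — contradicting that $\sigma$ is critical. Therefore $\sigma_{j+1}$ is joined with $\sigma_{j+2}$; since joined pairs are written in decreasing order, $\sigma_{j+1} > \sigma_{j+2}$ as word entries, whence $j+1 \in \Des(w)$. The main obstacle I anticipate is the bookkeeping in the previous paragraph: cleanly establishing that a critical cell with $\sigma_j$ free really does survive into $\Omega_{r,s}$ at the $(r,s)$-stage with Condition~(1) intact. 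This requires carefully tracing which fibers $\rho_{r',s'}^{-1}(b)$ the cell passes through on all loops with $(r',s')$ lexicographically after $(r,s)$, and arguing that none of those restrictions could have been violated — equivalently, that a free entry $\sigma_j$ could only fail to be matched on its own $(r,s)$-loop for the reasons ruled out above. I would handle this by the same ``prescribed positions of $U_{r,s}$'' device used in Claim~\ref{PhiProof}, combined with the observation that Step~4 only ever restricts to the $b$-fiber, so the $a$-conditions at position $j$ are never sabotaged by an earlier loop.
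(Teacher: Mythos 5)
Your overall strategy is the same as the paper's: feed the critical cell $\sigma$ into Step~2 of Algorithm~\ref{MatchAlg} at the position $j=j_{r,s}$ of the free entry $\sigma_j$, verify the three conditions, and conclude that $\sigma$ would be matched, contradicting criticality. Your handling of Conditions (2) and (3) is correct and matches the paper's (you assume the negation of the conclusion to obtain Condition (3); the paper equivalently deduces the negation of Condition (3) from criticality). Your worry about whether $\sigma$ lies in the domain $\phi_{r,s}^{-1}(j_{r,s})\subseteq\Omega_{r,s}$ is unnecessary: a critical cell is never matched, so it survives into every $\Omega_{r,s}$, and it trivially lies in the $\phi_{r,s}$-fiber over its own value $j_{r,s}$.

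The genuine gap is Condition (1). Your justification --- that a free $\sigma_{j-1}$ with $\sigma_{j-1}>r_s$ is ``exactly the configuration that gets matched earlier,'' so $\sigma$ would already have been removed --- does not close in one step. At the earlier loop for the letter $\sigma_{j-1}=r'_{s'}$ (earlier because $r'>r$ and the algorithm processes larger letters first), Conditions (2) and (3) do hold at position $j-1$, but Condition (1) at \emph{that} loop asks about $\sigma_{j-2}$, which may itself be a still larger free entry; in that case $\sigma$ lands in $\rho_{r',s'}^{-1}(b)$ and is not matched there. For instance, the vertex $321$ in $\Hom(B_3)$ is matched only at the loop for the letter $3$, not at the loops for $1$ or $2$. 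To repair this, pass to the minimal index $p$ of the maximal run of consecutive free, strictly descending entries ending at position $j$: by minimality either $p=1$, or $\sigma_{p-1}$ is not free, or $\sigma_{p-1}\leq\sigma_p$, so Condition (1) holds at $p$, while Conditions (2) and (3) hold there as well, whence $\sigma$ is matched --- contradiction. Equivalently, induct on $j$: the bad configuration ($\sigma_{j-1}$ free and $j-1\in\Des(w)$) contradicts the claim already established at index $j-1$. This is also what underlies the paper's terse assertion that ``either $j-1\notin\Des(w)$ or $\sigma_{j-1}$ is not free.''
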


\begin{proof}
  Suppose that $\sigma_j$ is free and $j \in \Des(w)$.
  We will argue that $\sigma_{j+1}$ cannot be free and, hence, must be joined with $\sigma_{j+2}$.
  Since $\sigma_j = r_s$ for some $r \in [n]$ and $1 \leq s \leq i_r$, we must have $j = j_{r,s}$ and $\sigma \in \phi_{r,s}^{-1}(j_{r,s}) \in \Omega_{r,s}$.
  Since $\sigma_j$ is free and $j \in \Des(w)$, we must have that either $j-1 \notin \Des(w)$ or $\sigma_{j-1}$ is not free.
  Either way, Condition (1) in Step 2 of Algorithm \ref{MatchAlg} holds.
  Since $j \in \Des(w)$, Condition (2) holds as well.
  However, since $\sigma$ is critical, we cannot have $\rho_{r,s}(\sigma) = a$.
  This means that Condition (3) cannot be satisfied, which means that $\sigma_{j+1}$ cannot be free.
  Hence, $\sigma_{j+1}$ must be joined with something, i.e. $\sigma_{j+2}$.
  It follows that $j+1 \in \Des(w)$ since we write joined pairs in descending order.
\end{proof}

\begin{claim}\label{j+3Claim}
  Suppose $\sigma_j$ is free and $j \in \Des(w)$.
  If $j+3 \in \Des(w)$, then $\sigma_{j+3}$ is free.
\end{claim}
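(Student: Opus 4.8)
The plan is to argue by contradiction, combining the hypotheses with the behavior of the matching algorithm on the loop indexed by the label and position of $\sigma_{j+3}$. Suppose $\sigma_j$ is free, $j \in \Des(w)$, $j+3 \in \Des(w)$, but $\sigma_{j+3}$ is \emph{not} free, i.e.\ $\sigma_{j+3}$ is joined. Since by Claim~\ref{j+1Claim} we know $\sigma_{j+1}$ and $\sigma_{j+2}$ are joined (and so positions $j+1, j+2$ form a parenthesized pair), the entry $\sigma_{j+3}$ is not joined \emph{downward} with $\sigma_{j+2}$; hence if $\sigma_{j+3}$ is joined at all, it must be joined with $\sigma_{j+4}$, forming the pair at positions $j+3, j+4$. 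I would first record this structural fact: the entries at positions $j, j+1, j+2, j+3, j+4$ have the shape $f \,(p\, p')\,(q\, q')$ where $f = \sigma_j$ is free and the two parenthesized pairs are written in descending order.

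Next I would apply Step~2 of Algorithm~\ref{MatchAlg} to the cell $\sigma$ on the loop where $r_s := \sigma_{j+3}$; write $j' := j+3 = j_{r,s}$, so that $\sigma \in \phi_{r,s}^{-1}(j_{r,s}) \subseteq \Omega_{r,s}$. I claim $\rho_{r,s}(\sigma) = a$, contradicting that $\sigma$ is critical. Condition~(3) holds because $\sigma_{j+3}$ and $\sigma_{j+4}$ are joined together. Condition~(2) holds because $\sigma_{j+3} > \sigma_{j+4}$: this is exactly the statement that the pair at positions $j+3, j+4$ is written in descending order, which is our convention. The only substantive point is Condition~(1): if $\sigma_{j+2} = \sigma_{j'-1}$ is free, then we must have $\sigma_{j+3} \geq \sigma_{j+2}$. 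But $\sigma_{j+2}$ is \emph{not} free---it is joined with $\sigma_{j+1}$ by Claim~\ref{j+1Claim}---so the hypothesis of Condition~(1) is vacuously satisfied. Hence all three conditions hold, $\rho_{r,s}(\sigma) = a$, and $\sigma$ is matched by Step~3 on the $(r,s)$ loop, contradicting criticality. Therefore $\sigma_{j+3}$ must be free.

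The main obstacle I anticipate is making sure that the loop indexed by $(r,s)$ with $r_s = \sigma_{j+3}$ is actually reached, i.e.\ that $\sigma \in \Omega_{r,s}$ and not discarded on some earlier loop; this is the same issue that implicitly underlies Claims~\ref{StartDesClaim} and~\ref{j+1Claim}. Since $\sigma$ is a critical cell of the final matching, it survives \emph{every} loop of the algorithm (it is never matched in any Step~3 and never lands in a $\rho^{-1}(a)$ fiber), so in particular $\sigma$ does survive into $\Omega_{r,s}$ for the relevant $(r,s)$; I would note this briefly and reuse it exactly as the preceding two claims do. A secondary (purely bookkeeping) subtlety is verifying the shape $f\,(p\,p')\,(q\,q')$ really forces $\sigma_{j+3}$ to pair \emph{rightward} rather than being free or pairing with $\sigma_{j+2}$: this follows because non-overlapping parenthesized pairs cannot share the entry $\sigma_{j+2}$, which is already used in the pair $(\sigma_{j+1}\,\sigma_{j+2})$. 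Once these framing points are in place, the conclusion is immediate from Step~2 of the algorithm.
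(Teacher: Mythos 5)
Your proposal is correct and follows essentially the same route as the paper: assume $\sigma_{j+3}$ is joined (hence with $\sigma_{j+4}$), run Step~2 of Algorithm~\ref{MatchAlg} on the loop with $r_s=\sigma_{j+3}$, verify Conditions (1)--(3), and contradict criticality. Your justification of Condition~(1) via the non-freeness of $\sigma_{j+2}$ (which is the entry $\sigma_{j_{r,s}-1}$ actually referenced by that condition) is in fact slightly more careful than the paper's phrasing, and your remark that a critical cell survives into every $\Omega_{r,s}$ is the same implicit point the paper relies on.
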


\begin{proof}
  Suppose that $\sigma_j$ is free and $j \in \Des(w)$.
  Claim \ref{j+1Claim} asserts that $\sigma_{j+1}$ is joined with $\sigma_{j+2}$ and that $j+1 \in \Des(w)$.
  Suppose that $j+3 \in \Des(w)$.
  We argue that $\sigma_{j+3}$ cannot be joined with $\sigma_{j+4}$.
  For the sake of contradiction, suppose that $\sigma_{j+3}$ and $\sigma_{j+4}$ are joined.
  Since $\sigma_{j+3} = r_s$ for some $r \in [n]$ and $1 \leq s \leq i_r$, we must have $j+3 = j_{r,s}$ and $\sigma \in \phi_{r,s}^{-1}(j_{r,s}) \in \Omega_{r,s}$.
  Since $\sigma_{j+3}$ is not free, Condition (1) in Step 2 of Algorithm \ref{MatchAlg} vacuously holds.
  Conditions (2) and (3) hold as a result of our hypothesis and the fact that joined elements are a descending pair.
  Hence, $\rho_{r,s}(\sigma) = a$ and $\sigma$ is matched by Step 3 of Algorithm~\ref{MatchAlg} on the $r,s$ loop.
  This contradicts that $\sigma$ is a critical cell, hence $\sigma_{j+3}$ must be free.
\end{proof}

We will now use the previous three claims to prove the bijection between the critical cells and the words with prescribed descent sets.

\begin{proof}[Proof of Critical Cell Bijection]
  Observe first that for every permutation $w$ of $\{1^{i_1},2^{i_2},\ldots,n^{i_n}\}$, the descent set $\Des(w)$ can be written as a disjoint union of intervals, each beginning with an element $m_t\in M$, i.e. there exist non-negative integers $q_t$ such that
  \begin{equation}
    \Des(w) = \biguplus_{t} \{m_t,m_t+1,\ldots,m_t+q_t\}
  \end{equation}
  where $\bigcup_t \{m_t-1, m_t+m_t+1\} \cap \Des(w) = \emptyset$.
  Thus, what we need to establish is that each $q_t$ is of the form $3k+1$ or $3k+2$, and we will be done.
  
  The result is proved via the following algorithm.
  We assume each $m_t$ is an element of $M$, i.e. $m_t \in \Des(w)$ but $m_t-1 \notin \Des(w)$.

  \begin{algorithm}

    INITIALIZE: $t:=1$ and $k:=0$.

    Step 1: WHILE $t \leq |M|$, observe that $m_t \in M \subseteq \Des(w)$ and $\sigma_{m_t}$ is free by Claim \ref{StartDesClaim}.

    Step 2: Observe that Claim~\ref{j+1Claim} asserts that that $m_t + 3k + 1 \in \Des(w)$.

    Step 3: If $m_t + 3k + 2 \notin \Des(w)$, then $q_t = 3k + 1$, which agrees with our description of the descent set of a critical cell.
    So, if $m_t + 3k + 2 \notin \Des(w)$, then increment $t$ by one and return to Step 1.
    Otherwise, continue to Step 4.

    Step 4: Since $m_t + 3k + 2 \in \Des(w)$, we consider $m_t + 3k + 3$.
    If $m_t + 3k + 3 \notin \Des(w)$, then $q_t = 3k + 2$, which agrees with our description of the descent set of a critical cell.
    So, if $m_t + 3k + 3 \notin \Des(w)$, then increment $t$ by one and return to Step 1.
    Otherwise, continue to Step 5.

    Step 5: Since $m_t + 3k + 3 \in \Des(w)$, $\sigma_{m_t + 3k + 3}$ is free by Claim \ref{j+3Claim}.
    Increment $k$ by one and return to Step 2.
		
  \end{algorithm}

  This algorithm shows that, for each $1 \leq t \leq |M|$ and $m_t\in M$, there exists some $q_t$ of the form $3k+1$ or $3k+2$ such that the set $\{m_t, m_t+1, \dots, m_t + q_t\} \subseteq \Des(w)$ while $\{m_t-1, m_t+q_t+1\} \cap \Des(w) = \emptyset$.

\end{proof}

\begin{example}
  Consider the critical cell $237(64)9(85)1$ with underlying word $w=237649851$.
  Then $M=\{3,6\}$ and $\Des(w)=\{3,4\}\uplus \{6,7,8\}$.
\end{example}

\begin{example}
  Consider the critical cell $2223(21)16(54)3(21)334(32)35$ which has underlying word $w=22232116543213343235$
  Then $M=\{4,8,16\}$ and
  \[
    \Des(w)=\{4,5,6\}\uplus \{8,9,10,11,12\}\uplus \{16,17\} \, .
  \]
\end{example}

\subsection{Optimality of the matching}

To prove that our matching is optimal, we will show that all the incidence numbers between critical cells in the resulting Morse complex are equal to zero.
Our method of proof is to produce a sign-reversing involution on the set of all alternating paths between any pair of critical cells in adjacent dimensions, where the sign that is reversed is the weight of that alternating path as defined by~\eqref{eqn:weight} in Theorem~\ref{thm:dmt}.
The following useful characterization of the critical cells is a direct consequence of Claim \ref{j+1Claim}.

\begin{claim}\label{claim:increasing}
  Consecutive free entries in a critical cell are weakly increasing; hence, any critical $k$-cell $\sigma$ must be of the form
	\[
		\sigma=w_1(\beta_1\alpha_1)w_2(\beta_2\alpha_2)w_3\cdots (\beta_{k}\alpha_{k})w_{k+1}
  \]
  where each $w_i$ is a weakly increasing multiset permutation of length $\ell_i$ where $w_{i,\ell_i}>\beta_i>\alpha_i$.
\end{claim}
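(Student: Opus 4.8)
The plan is to prove Claim~\ref{claim:increasing} by combining Claim~\ref{j+1Claim} with a small amount of bookkeeping about where joined pairs can sit. The first observation is that a critical cell $\sigma$, written as a parenthesized multiset word, has its joined pairs each recorded in strictly decreasing order, so each joined pair contributes a descent. Conversely, by Claim~\ref{j+1Claim}, if $\sigma_j$ is free and $j$ is a descent, then $\sigma_{j+1}$ is joined with $\sigma_{j+2}$. I want to use this to show that any two consecutive \emph{free} entries in $\sigma$ satisfy $\sigma_j \le \sigma_{j+1}$.

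The key step: suppose $\sigma_j$ and $\sigma_{j+1}$ are both free and, for contradiction, $\sigma_j > \sigma_{j+1}$. Since the entries are distinct, this is a genuine descent, so $j \in \Des(w)$. But Claim~\ref{j+1Claim} then forces $\sigma_{j+1}$ to be joined with $\sigma_{j+2}$, contradicting that $\sigma_{j+1}$ is free. Hence consecutive free entries are weakly increasing. (The ``weakly'' is needed because repeated multiset values at adjacent positions are always free, as the underlying word is unchanged by swapping equal entries — there is no transposition to record.)

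From here the structural form follows by grouping. Reading $\sigma$ from left to right, the positions split into maximal runs of free entries separated by joined pairs; write $\sigma = w_1(\beta_1\alpha_1)w_2(\beta_2\alpha_2)\cdots(\beta_k\alpha_k)w_{k+1}$ accordingly, where $k$ is the dimension of $\sigma$ (the number of joined pairs), each $w_i$ is the block of free entries between the $(i-1)$-st and $i$-th joined pair (possibly empty), and each joined pair is written $(\beta_i\alpha_i)$ with $\beta_i > \alpha_i$ by convention. By the previous paragraph each $w_i$ is weakly increasing. It remains to check the boundary condition $w_{i,\ell_i} > \beta_i$ when $\ell_i \ge 1$: here $w_{i,\ell_i}$ is the last (free) entry of $w_i$, immediately preceding the joined pair $(\beta_i\alpha_i)$, so the entry $\beta_i$ sits one position to its right. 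If $w_{i,\ell_i} \le \beta_i$ we would have $w_{i,\ell_i} \le \beta_i$ with $w_{i,\ell_i}$ free at some position $j$ and $j \notin \Des(w)$ only if $w_{i,\ell_i} \le \beta_i$; but then position $j$ is not a descent while $\sigma_j = w_{i,\ell_i}$ is free, and Claim~\ref{j+1Claim}'s contrapositive does not directly apply — instead I argue via Step 2 of Algorithm~\ref{MatchAlg}: if $w_{i,\ell_i} \le \beta_i$ then Condition (1) holds vacuously or by hypothesis for the relevant $r,s$ loop, Condition (2) holds since $\beta_i > \alpha_i$ places $\beta_i$ strictly above $\alpha_i$ and the position to the right of $w_{i,\ell_i}$ is $\beta_i$... wait, I need $r_s > \sigma_{j+1}$, i.e. $w_{i,\ell_i} > \beta_i$, which is exactly what fails, so actually the argument is that if $w_{i,\ell_i} \le \beta_i$ then Condition (2) fails and $\rho(\sigma) = b$, consistent with criticality — so this direction needs the opposite reasoning. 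Let me restate: the genuine content is that $w_{i,\ell_i} \neq \beta_i$ (distinct, since one is free and one is joined but they could still be equal values — no, a free entry and a joined entry can have equal value), so suppose $w_{i,\ell_i} < \beta_i$: then $w_{i,\ell_i}$ is free, the position $j = j_{w_{i,\ell_i}}$ of this entry has $j \notin \Des(w)$, and $j+1 \in \Des(w)$ (the joined pair), contradicting Claim~\ref{j+1Claim} applied at $j$ read in reverse — actually Claim~\ref{j+1Claim} says free-entry-at-a-descent forces a joined pair to the right, so it does not constrain $j \notin \Des(w)$. The clean fix is: $w_{i,\ell_i} \ge \beta_i$ would give, combined with distinctness arguments handled by Claim~\ref{StartDesClaim}-style reasoning, the strict inequality; I expect the main obstacle is precisely nailing this boundary inequality $w_{i,\ell_i} > \beta_i$ rigorously from Step 2 of the algorithm rather than from Claim~\ref{j+1Claim}, and the honest route is to re-examine the matching loop for the entry $w_{i,\ell_i}$: if $w_{i,\ell_i} \le \beta_i = \sigma_{j+1}$ then either $w_{i,\ell_i}$ and $\sigma_{j+1}$ are equal (impossible to both appear with one free and adjacent joined in a critical cell — needs a short check) or $w_{i,\ell_i} < \sigma_{j+1}$ making position $j$ non-descending while $\sigma_{j+1}$ joined, which is allowed, so in fact the inequality $w_{i,\ell_i}>\beta_i$ must be derived from the descent-interval structure established in the Critical Cell Bijection proof (where $m_t$ starts each interval and $\sigma_{m_t}$ is free with $m_t - 1 \notin \Des(w)$), meaning the final statement should be read as a consequence of that whole analysis; thus I would cite Claims~\ref{StartDesClaim} and~\ref{j+1Claim} together to conclude that every maximal block of free entries is weakly increasing and that each such block, when nonempty, ends with an entry exceeding the top of the following joined pair because that top entry is $\beta_i$ and $j_{\beta_i} \in \Des(w)$ forces, by the bijection's interval structure, that the free entry at position $j_{\beta_i}-1$ lies outside the descent interval only if it is weakly larger.
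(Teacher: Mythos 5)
Your first paragraph is correct and matches the intended argument: if $\sigma_j$ and $\sigma_{j+1}$ are both free and $\sigma_j > \sigma_{j+1}$, then $j \in \Des(w)$, and Claim~\ref{j+1Claim} forces $\sigma_{j+1}$ to be joined with $\sigma_{j+2}$, a contradiction. That cleanly gives the weak monotonicity of each free block.

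The rest of the proposal never actually establishes the boundary inequality $w_{i,\ell_i} > \beta_i$, nor the implicit fact that $\ell_i \ge 1$ for $1 \le i \le k$ (i.e.\ that two joined pairs are never adjacent and that $\sigma$ cannot begin with a joined pair). You correctly observe that Claim~\ref{j+1Claim} alone does not give this, but then you run the algorithm on the wrong loop. You examine the $(r,s)$ loop where $r_s = w_{i,\ell_i}$, find that Condition~(2) fails under the hypothesis $w_{i,\ell_i} \le \beta_i$, and conclude $\rho = b$ --- which is consistent with criticality and therefore yields no contradiction. The productive move is to look at the loop where $r_s = \beta_i$ sits at position $p = j_{r,s}$. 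Because $\beta_i$ and $\alpha_i$ form a joined pair written in decreasing order, Conditions~(2) and~(3) of Step~2 of Algorithm~\ref{MatchAlg} hold automatically. Hence, since $\sigma$ is critical (so $\rho_{r,s}(\sigma)=b$), Condition~(1) must fail: the entry at position $p-1$ must exist, must be free, and must satisfy $\sigma_{p-1} > \beta_i$. This simultaneously rules out $\ell_i = 0$ and gives $w_{i,\ell_i} > \beta_i$. Equivalently, this is the contrapositive of Claim~\ref{StartDesClaim}: the position $p$ of $\beta_i$ is a descent, and if $p-1 \notin \Des(w)$ that claim would force $\sigma_p$ to be free, contradicting that $\beta_i$ is joined; and the possibility that $\sigma_{p-1}$ is joined with $\sigma_{p-2}$ is excluded by the same direct algorithm check (Condition~(1) is then vacuous and $\rho = a$ again). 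You gesture at Claim~\ref{StartDesClaim} at the very end, but never carry the argument through, so the proof as written is incomplete.
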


\begin{proof}[Proof of Optimality]
  Let $\sigma$ be a critical $k$-cell and $\tau$ be a critical $(k-1)$-cell such that there exists an alternating path from $\sigma$ to $\tau$ given as follows:
  \begin{equation}\label{eqn:altpath}
    c = (\sigma,a_1,u(a_1),a_2,u(a_2),a_3,\ldots,a_t,u(a_t),\tau) \, .
  \end{equation}
  We will say that the \emph{length} of $c$ is $t$.
  By Claim~\ref{claim:increasing}, we know that $\tau$ is of the form
  \[
    \tau=w_1(\beta_1\alpha_1)w_2(\beta_2\alpha_2)w_3\cdots (\beta_{k-1}\alpha_{k-1})w_{k}
  \]
  where each $w_i$ is a weakly increasing multiset permutation of length $\ell_i$ where $w_{i,\ell_i}>\beta_i>\alpha_i$.

  In order for $\tau \subset u(a_t)$, we have that $u(a_t)$ contains all the joined pairs in $\tau$ plus one additional joined pair.
  Call this additional joined pair $(\beta,\alpha)$, where $\beta>\alpha$.
  Because $\tau$ is critical and $a_t$ is paired with $u(a_t)$ by joining $\alpha$ and $\beta$, it must be that $a_t$ is obtained from $\tau$ by swapping the positions of $\alpha$ and $\beta$ since these are the only elements in the parenthesized permutation that are not in the configuration of a critical cell.
  Extending this idea, we call a sequence $(a_i,u(a_i),a_{i+1})$ a \emph{swap} if $a_{i}$ is obtained from $a_{i+1}$ by exchanging two adjacent free entries in $a_{i+1}$.

  Now, for our alternating path $c$ defined in~\eqref{eqn:altpath}, let $j$ be the maximum index in $c$ such that $(a_j,u(a_j),a_{j+1})$ is not a swap.
  We first argue that $t\geq 2$, then that $j\leq t-1$.
  First, for any alternating path $c$, we have $t\geq 2$, since $a_1$ is obtained from $\sigma$ by releasing a joined pair, which introduces a pair of adjacent descents in a run of free entries of $a_1$ that must be reversed to return to critical position, i.e. satisfying Claim~\ref{claim:increasing}.
  Second, we must have that $u(a_t)$ is obtained from $\tau$ by joining two free elements of $\tau$ which are in increasing order, elements which are then reversed in order when placed in parentheses.
  All of the other joined pairs in $u(a_t)$ are in the configuration given in \ref{claim:increasing}, and thus $a_t$ must arise from a swap of the two newly joined elements in $\tau$.
  Hence, our choice of $j$ is well-defined.

  Since each $(a_i,u(a_i),a_{i+1})$ for $i\geq j+1$ is a swap, it follows that there exist permutations $\pi_i(w_i)$ of the weakly ascending multiset permutations in $\tau$ such that $a_j$ is of the form
  \[
    a_j=\pi_1(w_1)(\beta_1\alpha_1)\pi_2(w_2)(\beta_2\alpha_2)\pi_3(w_3)\cdots (\beta_{k-1}\alpha_{k-1})\pi_{k}(w_{k})\, .
  \]
  Furthermore, $u(a_j)$ is obtained by joining two of the free entries in $a_{j+1}$, call them $\gamma<\delta$, and $a_j$ is obtained by releasing a different joined pair.

  We are now able to construct our involution.
  To our alternating path $c$ as defined above, we assign the alternating path $c'$ that agrees with $c$ on $(\sigma,\ldots,u(a_j))$ but redefines $a_{j+1}'$ to be the cell obtained from $u(a_j)$ by releasing $\gamma$ and $\delta$ in the opposite order from $a_{j+1}$.
  We then inductively define $a_i',u(a_i'),a_{i+1}'$ by using swaps starting with $a_{j+1}$.
  The requirement that $a_i'$ is matched to $u(a_i')$ determines the swap that will take place, and ensures that all of the joined elements in our original cell $\tau$ remain joined throughout this process.
  Further, because we only use swaps, and each swap decreases the number of inversions in the underlying word for $\tau$ by one, it follows that $c'$ has length either $t-1$ or $t+1$ and is also an alternating path from $\sigma$ to $\tau$.
  Because both $c$ and $c'$ terminate in a sequence of swaps following $u(a_j)$, it follows that $c$ and $c'$ are bijectively mapped to each other, and we have established an involution.

  We next need to show that $[\tau:\sigma]=0$ for all such critical cells. 
  From~\eqref{eqn:weight}, we have that
  \[
    [\tau:\sigma]=\sum_cw(c)
  \]
  with $c$ ranging over all alternating paths from $\sigma$ to $\tau$ and
  \[
    w(c):=(-1)^t[a_1:\sigma][\tau:u(a_t)]\prod_{i=1}^t[a_i:u(a_i)]\prod_{i=1}^{t-1}[a_{i+1}:u(a_i)]  \, .
  \]
  From~\eqref{eqn:pairincidence} it follows that for any swap $(a_i,u(a_i),a_{i+1})$ we have that $[a_i:u(a_i)]\cdot[a_{i+1}:u(a_i)]=-1$.
  Thus, if we define $A:=[a_1:\sigma]\prod_{i=1}^j[a_i:u(a_i)]\cdot \prod_{i=1}^{j-1}[a_{i+1}:u(a_i)]$, we have
  \begin{align*}
    w(c)\cdot w(c')& =(-1)^t(-1)^{t\pm 1}\cdot A\cdot A \cdot (-1)^{\# \text{ swaps in }c}\cdot (-1)^{\# \text{ swaps in }c'} \cdot [a_{j+1}:u(a_j)]\cdot [a_{j+1}':u(a_j)] \\
    & = -1
  \end{align*}
  where the product of the final two terms is $-1$ due to~\eqref{eqn:pairincidence}.
  Hence, our involution is sign-reversing, and our matching is optimal.

\end{proof}

\begin{example}
  The alternating path given by
  \begin{align*}
    c=[&(7(63)9(81)5(42),\, 7(63)9(81)542,\, 7(63)9(81)(54)2,\\
       &7(63)918(54)2,\, 7(63)(91)8(54)2,\, 7(63)198(54)2,\\
       &7(63)1(98)(54)2,\, 7(63)189(54)2]
  \end{align*}
  ends with the final five cells involved in swaps.
  Thus, in the third cell of $c$, we release $(81)$ in the opposite order to obtain $c'$ below, as described in the proof of optimality.
  Note that we complete the fourth and later cells listed in $c'$ by requiring that all subsequent cells be obtained as swaps.
  \begin{align*}
    c'=[&(7(63)9(81)5(42),\, 7(63)9(81)542,\, 7(63)9(81)(54)2,\\
       &7(63)981(54)2,\, 7(63)(98)1(54)2,\, 7(63)891(54)2,\\
       &7(63)8(91)(54)2,\, 7(63)819(54)2,\, 7(63)(81)9(54)2,\\
       &7(63)189(54)2]
  \end{align*}

\end{example}

\section{Examples and Special Cases}\label{sec:examples}


\subsection{$\bi = (r)$ or $(r,s)$}

When $\bi = (r)$, the poset is a single chain, in which case our complex $\Hom{(r)}$ is a single point.
When $\bi = (r,s)$, it is a straightforward exercise to show that the corresponding product of chains is a rectangular grid that collapses via folds to a single chain.
Thus, $\Hom{(r,s)}$ is contractible by Theorem~\ref{posetfold}.


\subsection{$\bi = (r,s,t)$}

When our poset is a product of three chains, the structure of $\Hom(\bi)$ becomes more interesting.

\begin{theorem}
  The number of critical $k$-cells in $\Hom(r,s,t)$ is $\binom{r}{k} \cdot \binom{s}{k} \cdot \binom{t}{k}$ where $k \leq r \leq s \leq t$.
\end{theorem}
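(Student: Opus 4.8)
The plan is to derive this from the critical-cell classification of Theorem~\ref{thm:mainchain} and then exploit how rigid words over the three-letter alphabet $\{1,2,3\}$ are. By Theorem~\ref{thm:mainchain}, the critical cells of $\Hom(r,s,t)$ are indexed by permutations $w$ of $\{1^r,2^s,3^t\}$ whose descent set is a disjoint union of maximal runs $\{m_t,\dots,m_t+q_t\}$ with each $q_t$ of the form $3j+1$ or $3j+2$, and the dimension of the cell is $\sum_t\lceil q_t/3\rceil$. The first observation I would make is that a maximal descent run of length $L$ in a word over $\{1,2,3\}$ forces $w_i>w_{i+1}>\cdots>w_{i+L}$, i.e.\ $L+1$ strictly decreasing values, so $L\le 2$; hence $q_t=L-1\le 1$, and combined with $q_t\equiv 1,2\pmod 3$ this forces $q_t=1$ for every $t$. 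Consequently a critical cell of $\Hom(r,s,t)$ is precisely a word $w$ whose descent set is a disjoint union of $k$ doubleton intervals $\{m_t,m_t+1\}$, and its dimension is $\sum_t\lceil 1/3\rceil=k$; so it suffices to count words of content $(r,s,t)$ with exactly $k$ descent doubletons.

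Next I would unpack the structure of such words. A length-$2$ descent run at $\{m,m+1\}$ forces $w_mw_{m+1}w_{m+2}=321$, the unique strictly decreasing triple over $\{1,2,3\}$; and since $m+2\notin\Des(w)$, consecutive runs satisfy $m_{t+1}\ge m_t+3$, so the $k$ occurrences of the factor $321$ occupy pairwise disjoint intervals of positions. Deleting these $k$ factors splits $w$ into $k+1$ contiguous blocks $U_0,U_1,\dots,U_k$; each $U_i$ carries no descent (all descents lay inside the runs), hence is weakly increasing, i.e.\ of the form $1^{a_i}2^{b_i}3^{c_i}$. Conversely, from any tuple $(U_0,\dots,U_k)$ of weakly increasing words over $\{1,2,3\}$ I would form $w=U_0\,(321)\,U_1\,(321)\cdots(321)\,U_k$ and check that no descent is created at a junction (the letter preceding a $321$ block is $\le 3$ and the one following it is $\ge 1$) and none inside any $U_i$, so $\Des(w)$ is exactly the $2k$ positions inside the $k$ copies of $321$, forming $k$ maximal runs of length $2$ with the gap conditions of Theorem~\ref{thm:mainchain} automatically satisfied. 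These two constructions are mutually inverse, giving a bijection between critical $k$-cells of $\Hom(r,s,t)$ and such tuples $(U_0,\dots,U_k)$. I expect verifying this bijection to be the main point of the proof, though it is entirely elementary: the only care needed is in the uniqueness of the decomposition, the absence of spurious descents at the junctions, and the edge cases of empty blocks and of adjacent $321$ factors.

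Finally I would count the tuples. Writing $U_i=1^{a_i}2^{b_i}3^{c_i}$, the content constraint splits into three independent ones, $\sum_i a_i=r-k$, $\sum_i b_i=s-k$, $\sum_i c_i=t-k$ (the $k$ copies of $321$ supply $k$ ones, $k$ twos, and $k$ threes). The number of weak compositions of $r-k$ into the $k+1$ parts $a_0,\dots,a_k$ is $\binom{(r-k)+(k+1)-1}{(k+1)-1}=\binom{r}{k}$, and likewise $\binom{s}{k}$ for the twos and $\binom{t}{k}$ for the threes. Hence the number of critical $k$-cells is $\binom{r}{k}\binom{s}{k}\binom{t}{k}$; for $k\le r\le s\le t$ all three factors are positive, and for $k>r$ the product is $0$, which matches the fact that one cannot place one $1$ inside each of $k>r$ copies of $321$. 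This gives the claimed formula.
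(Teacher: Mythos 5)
Your proof is correct and follows essentially the same route as the paper: both reduce to the observation that over the alphabet $\{1,2,3\}$ every critical cell is a concatenation of weakly increasing blocks separated by $k$ copies of the configuration $3(21)$ (your $321$ descent runs, which you derive explicitly from the $q_t\equiv 1,2\pmod 3$ condition where the paper simply asserts the configuration), and then count. The only cosmetic difference is that you count by weak compositions of $r-k$, $s-k$, $t-k$ into $k+1$ blocks while the paper counts by choosing $k$-subsets of the occurrences of each letter; these give the same binomial coefficients $\binom{r}{k}\binom{s}{k}\binom{t}{k}$.
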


\begin{proof}
  The complex $\Hom(r,s,t)$ corresponds to words from the multiset $\{1^r, 2^s, 3^t\}$.
  The only possible critical configuration for such a word has a transposition of the form $3(21)$ with its preceding ascending free sequence listing its free 1's, followed by its free 2's, and ending with its free 3's.

  Consider the list $1_1, 1_2, \dots, 1_r$.
  Select $k$ of these $1$'s, say $1_{j_1}, 1_{j_2}, \dots, 1_{j_k}$.
  Recall that there are $\binom{r}{k}$ such ways to make these selections.
  We will distribute the entries $1_1, 1_2, \dots, 1_r$ into a critical cell $\sigma$ of dimension $k$.
  We put the entry $1_{j_1}$ into the first transposition of $\sigma$ while the entries $1_1, 1_2, \dots, 1_{{j_1}-1}$ go into the free sequence preceding this first transposition. 
  For $1 < n < k$, we put $1_{j_n}$ into the $n$-th transposition while $1_{{j_n}+1}, \dots, 1_{{j_{n+1}}-1}$ all go into the free sequence between the $n$-th and $(n+1)$-th transpositions.
  Finally, we put $1_{j_k}$ into the final transposition, and  $1_{{j_k}+1}, \dots, 1_r$ all go into the terminal free sequence.

  We then select $k$ of the $s$ 2's in one of $\binom{s}{k}$ ways, select $k$ of the $t$ 3's in one of $\binom{t}{k}$ ways, and perform a similar distribution process.
  The selected 2's should be placed in the transpositions with the selected 1's, and the selected 3's become the free entries just before their corresponding transpositions.
  It is straightforward to reverse this procedure and convert a critical cell in $\Hom(r,s,t)$ back into a selection of 1's, 2's, and 3's.

  Since the selections for each letter are independent of the selections of the other letters, we have $\binom{r}{k} \cdot \binom{s}{k} \cdot \binom{t}{k}$ total ways to make selections and create a critical cell.
\end{proof}

\begin{example}
  In $\Hom(4,4,4)$, the critical cell $233(21)123(21)13$ corresponds to the following selections

  \begin{center}
    \fbox{$1_1$}, $1_2$, \fbox{$1_3$}, $1_4$ \qquad $2_1$, \fbox{$2_2$}, $2_3$, \fbox{$2_4$} \qquad $3_1$, \fbox{$3_2$}, \fbox{$3_3$}, $3_4$
  \end{center}

\end{example}


\subsection{$\bi=(1,1,\ldots,1)$}

We first recall that $C_1^n\cong B_n$ where $B_n$ is the Boolean algebra on $n$ symbols.
Thus, $\Hom(1,1,\ldots,1)=\Hom(B_n)$.
As mentioned earlier, this complex has been studied using discrete Morse theory by Severs and White~\cite{severswhite} in a different context; our results mirror theirs.

\begin{theorem}\label{thm:booleaneuler}
  The unreduced Euler characteristic of $\Hom(B_n)$ is
  \begin{equation}
    \chi_n = \sum^{\lfloor n/2 \rfloor}_{k=0} (-1)^k \frac{n!}{2^k}\binom{n-k}{k}. \label{eqn:homEuler}
  \end{equation}
\end{theorem}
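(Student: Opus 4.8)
The plan is to combine the critical-cell count from Theorem~\ref{thm:mainchain} with the observation that, since $\Hom(B_n)$ is torsion-free with homology rank in dimension $k$ equal to the number of critical $k$-cells, the Euler characteristic is an alternating sum of these counts. So the first task is to count the critical cells of $\Hom(B_n)$ of each dimension $k$. By Claim~\ref{claim:increasing}, a critical $k$-cell in $\Hom(B_n)$ has underlying word a permutation $w\in\mathfrak{S}_n$ together with a choice of $k$ non-overlapping joined pairs, each written in decreasing order, and with the additional structure forced by Theorem~\ref{thm:mainchain}: the descent set of $w$ is a disjoint union of intervals of lengths congruent to $1$ or $2$ mod $3$, with appropriate gaps. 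In the Boolean case each letter appears exactly once, so no two adjacent free entries can be equal; combined with the "weakly increasing" condition of Claim~\ref{claim:increasing} this forces each maximal run of free entries to be strictly increasing. The key structural point to extract is that a critical $k$-cell is equivalent to a choice of $k$ disjoint adjacent transpositions placed on an otherwise increasing arrangement — more precisely, the word breaks as $w_1(\beta_1\alpha_1)w_2\cdots(\beta_k\alpha_k)w_{k+1}$ with each $w_i$ strictly increasing, $w_{i,\ell_i}>\beta_i>\alpha_i$, and (from the descent-interval condition with $q_t\in\{1,2\}$, i.e.\ $k=1$ within each interval since three consecutive descents are impossible in $\mathfrak{S}_n$ with a "free, joined, joined, free" pattern needing repeated letters) the joined pairs are isolated by at least one free entry on each side.

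The combinatorial heart is therefore: the number of critical $k$-cells of $\Hom(B_n)$ equals the number of ways to partition $[n]$ into $k$ ordered pairs and $n-2k$ singletons, then arrange the resulting $n-k$ blocks in increasing order of (say) their largest element subject to the separation constraint — but in fact, once one checks carefully, the count reduces to the standard count of "partial matchings" on a linear order: choosing $k$ disjoint pairs of positions to be the transpositions. I would argue that the number of critical $k$-cells is exactly
\[
  \binom{n}{2k}(2k-1)!!\cdot(\text{something}) \quad\text{— no,}
\]
rather, I expect the cleaner route is to show directly that the number of critical $k$-cells is $\dfrac{n!}{2^k}\binom{n-k}{k}$, which one recognizes as: choose which $k$ of the $n-k$ "slots after merging" host a descent-pair in $\binom{n-k}{k}$ ways, and the factor $n!/2^k$ accounts for filling in the actual values (an ordered set partition into $k$ doubletons and $n-2k$ singletons, with each doubleton written in the unique decreasing order, gives $n!/(2^k)$ once the block-order is fixed). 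Then \eqref{eqn:homEuler} follows from
\[
  \chi_n=\sum_{k\geq 0}(-1)^k\,\#\{\text{critical }k\text{-cells}\}=\sum_{k=0}^{\lfloor n/2\rfloor}(-1)^k\frac{n!}{2^k}\binom{n-k}{k}.
\]

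The main obstacle will be the bookkeeping in the bijection that produces the count $\frac{n!}{2^k}\binom{n-k}{k}$: one must verify that the "separation" condition built into Theorem~\ref{thm:mainchain}'s descent-interval description (each $q_t\in\{3j+1,3j+2\}$, together with the gap condition $\Des(w)\cap\{m_t-1,m_t+q_t+1\}=\emptyset$) collapses, in the multiplicity-free Boolean setting, precisely to "each joined pair is flanked by free entries and the free runs are strictly increasing," with $j=0$ forced. Concretely, I would show that a three-letter-or-longer descent run would require a repeated letter among three consecutive non-free-or-free entries, which is impossible in $\mathfrak{S}_n$; hence every $q_t=1$, every interval is a single joined pair, and $k$ counts the number of such pairs. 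Once that structural reduction is in hand, the enumeration is the routine identity above. An alternative, which I would mention as a sanity check, is to derive \eqref{eqn:homEuler} independently from a direct inclusion–exclusion / transfer-matrix count of multi-homomorphisms weighted by $(-1)^{\dim}$ over the whole complex $\Hom(B_n)$, using that $\Hom(B_n)$ sits in the boundary of the permutohedron; matching the two expressions then also re-confirms the critical-cell count.
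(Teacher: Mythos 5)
Your central enumeration claim is false, and it is the load-bearing step of the whole argument. The quantity $\frac{n!}{2^k}\binom{n-k}{k}$ counts \emph{all} $k$-cells of $\Hom(B_n)$, not the critical ones: a $k$-cell is a parenthesized permutation with $k$ joined pairs, the block pattern of singletons and pairs is a composition of $n$ into parts $1$ and $2$ (there are $\binom{n-k}{k}$ of these), and the values fill the blocks in $n!/2^k$ ways. This is exactly the paper's proof, which then just invokes the definition $\chi_n=\sum_k(-1)^k f_k$ — no Morse theory needed. The critical cells are a much smaller subset. Concretely, for $n=3$ the complex is a hexagon with $6$ vertices and $6$ edges (matching $\frac{3!}{2^0}\binom{3}{0}=6$ and $\frac{3!}{2^1}\binom{2}{1}=6$), but the matching leaves only one critical $0$-cell ($123$) and one critical $1$-cell ($3(21)$), consistent with $b_0=b_1=1$ for a circle. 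So identifying the critical $k$-cell count with $\frac{n!}{2^k}\binom{n-k}{k}$ fails already at $n=2,3$.

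Your supporting structural claims are also wrong. Three (or more) consecutive descents are perfectly possible in $\mathfrak{S}_n$ — any strictly decreasing run such as $4321$ produces them without repeated letters — so nothing forces $q_t=1$ in the Boolean case; e.g.\ $4(32)15$ is a critical $1$-cell of $\Hom(B_5)$ with $q_1=2$, and $6(54)3(21)$ is a critical $2$-cell of $\Hom(B_6)$ with $q_1=4$. Nor is a joined pair necessarily flanked by free entries on both sides ($3(21)$ ends in one). Your route — computing Betti numbers from the critical-cell description of Theorem~\ref{thm:mainchain} and then alternating-summing them — is legitimate in principle (the complex is torsion-free, so $\chi_n=\sum_k(-1)^k b_k$), but it requires enumerating permutations by the prescribed descent-interval structure and then proving a nontrivial identity to recover \eqref{eqn:homEuler}; it is not the ``routine identity'' you describe. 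The direct face count you sketch at the end as a ``sanity check'' is in fact the entire proof.
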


\begin{proof}
  Fix $n \geq 1$.
  We now count the number of $k$-cells in $\Hom(B_n)$ for $0 \leq k \leq \lfloor n/2 \rfloor$.
  Since the dimension of a cell in $\Hom(B_n)$ equals the number of transpositions in its parenthesized notation, a $k$-cell will have transpositions in $k$ different locations.
  If we consider a free element as a block of size 1 and a transposition as a block of size 2, then we see that the relative order of free elements and transpositions for a given $k$-cell corresponds to a composition of $n$ using parts of size 1 and 2.
  We know there are $\binom{n-k}{k}$ such compositions.
  (There will be $n-k$ blocks, and we choose $k$ of them to be of size 2.)
  Now, for each such block arrangement, there are $\frac{n!}{2^k}$ ways to order the elements into the blocks where the order in a block of size 2 is irrelevant.
  In total, there are $\frac{n!}{2^k}\binom{n-k}{k}$ cells of size $k$ in $\Hom(B_n)$.
\end{proof}

\begin{theorem}\label{thm:eulerrecur}
  Equation~\eqref{eqn:homEuler} satisfies the recursion $\chi_n = n \cdot \chi_{n-1} - \binom{n}{2} \cdot \chi_{n-2}$ with initial conditions $\chi_1 = \chi_2 = 1$.
\end{theorem}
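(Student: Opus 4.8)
The plan is to prove the recursion combinatorially by refining the cell count from the proof of Theorem~\ref{thm:booleaneuler}. Recall that a $k$-cell of $\Hom(B_n)$ is encoded by a parenthesized permutation of $\{1,\dots,n\}$ with exactly $k$ joined pairs; equivalently, it is a composition of $n$ into parts of size $1$ (free blocks) and $2$ (joined blocks), together with a distribution of the labels $\{1,\dots,n\}$ into these blocks, where the two labels inside a size-$2$ block are unordered (we always write them in decreasing order). Let $f_n(k)$ denote the number of $k$-cells of $\Hom(B_n)$, so that $\chi_n=\sum_{k\ge 0}(-1)^k f_n(k)$, with the conventions $f_m(k)=0$ for $k<0$ and $f_0(0)=1$.

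First I would record the initial conditions. The lattice $B_1$ is the two-element chain, so $\Hom(B_1)$ is a single point and $\chi_1=1$; the lattice $B_2$ is a $2\times 2$ grid whose two maximal chains form the endpoints of the single edge $(21)$, so $\Hom(B_2)$ is an interval and $\chi_2=2-1=1$. Both values also drop out of \eqref{eqn:homEuler} directly.

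Next, the key step is a recursion for $f_n(k)$ obtained by conditioning on the \emph{last block} in the underlying composition. If the last block has size $1$, it is occupied by one of the $n$ labels, and deleting it leaves a $k$-cell on the remaining $n-1$ labels; since $f_{n-1}(k)$ is invariant under order-preserving relabeling of the ground set, this case contributes $n\,f_{n-1}(k)$. If the last block has size $2$, it is occupied by one of the $\binom{n}{2}$ unordered pairs of labels, and deleting it leaves a $(k-1)$-cell on the remaining $n-2$ labels, contributing $\binom{n}{2} f_{n-2}(k-1)$. This deletion map is a bijection onto its image in each case, so
\[
  f_n(k) = n\,f_{n-1}(k) + \binom{n}{2} f_{n-2}(k-1)
\]
for all $n\ge 2$ and all $k$.

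Finally, multiplying by $(-1)^k$ and summing over $k$ gives $\chi_n = n\sum_k(-1)^k f_{n-1}(k) + \binom{n}{2}\sum_k(-1)^k f_{n-2}(k-1)$; the first sum is $\chi_{n-1}$ and, after reindexing $k\mapsto k+1$, the second sum is $-\chi_{n-2}$, yielding $\chi_n=n\chi_{n-1}-\binom{n}{2}\chi_{n-2}$. I do not expect a genuine obstacle here; the only care needed is the bookkeeping in the deletion bijection (in particular, that $f_{n-1}(k)$ does not depend on which $(n-1)$-element subset of labels is used) and the boundary cases for small $n$ and $k$. An alternative, purely algebraic route is available: substitute \eqref{eqn:homEuler}, apply Pascal's rule $\binom{n-k}{k}=\binom{n-1-k}{k}+\binom{n-1-k}{k-1}$ to split the sum into two pieces, and recognize them as $n\chi_{n-1}$ and $-\binom{n}{2}\chi_{n-2}$ after pulling the factors $n$ and $\tfrac{n(n-1)}{2}$ out of $\tfrac{n!}{2^k}$.
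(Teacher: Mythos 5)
Your proof is correct, but it takes a genuinely different route from the paper's. You prove the refined cell-count recursion $f_n(k) = n\,f_{n-1}(k) + \binom{n}{2} f_{n-2}(k-1)$ bijectively, by deleting the last block of the underlying composition of $n$ into parts of size $1$ and $2$; summing with signs then yields the recursion for $\chi_n$ immediately, and your boundary conventions ($f_0(0)=1$, $f_m(k)=0$ for $k<0$) and the relabeling-invariance remark take care of the only delicate points. The paper instead works purely algebraically with the closed form \eqref{eqn:homEuler}: it normalizes to $X_n=\chi_n/n!$, applies Pascal's rule $\binom{n-1-k}{k}+\binom{n-1-k}{k-1}=\binom{n-k}{k}$ after reindexing, and must separately track the top boundary term according to the parity of $n$ (e.g.\ using $\binom{n-1-j}{j-1}=\binom{n-j}{j}$ when $n=2j$). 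Your combinatorial argument buys a conceptual explanation of where the coefficients $n$ and $\binom{n}{2}$ come from (a singleton last block versus a joined last block) and avoids the parity case analysis entirely, at the cost of returning to the cell structure of $\Hom(B_n)$ rather than manipulating the formula alone; the alternative algebraic route you sketch at the end is essentially the paper's proof. One could strengthen your writeup slightly by noting explicitly that the deletion maps in the two cases have disjoint images that exhaust all $k$-cells, but this is immediate from conditioning on the size of the last block.
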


\begin{proof}
  It is straightforward to verify the initial conditions.
  Let $X_n = \dfrac{\chi_n}{n!}$ for arbitrary $n \geq 1$, and 
  we prove instead that $X_n = X_{n-1} - \frac{1}{2} X_{n-2}$.
  Multiplying by $n!$ gives the result.

  First, suppose that $n = 2j$ for some integer $j$.
  From formula \ref{eqn:homEuler}, we know that
  \begin{align*}
    X_{n-1} &= \sum^{j-1}_{k=0} \left( \frac{-1}{2} \right)^k \binom{n-1-k}{k} \\
            &= \left( \frac{-1}{2} \right)^0 \binom{n-1-0}{0} \quad  + \quad \sum^{j-1}_{k=1} \left( \frac{-1}{2} \right)^k \binom{n-1-k}{k}. \\
  \end{align*}

  We also have, via reindexing, 
  \begin{align*}
    -\frac{1}{2} X_{n-2} &= \sum^{j-1}_{k=0} \left( \frac{-1}{2} \right)^{k+1} \binom{n-2-k}{k} \\
                         &= \sum^{j}_{k=1} \left( \frac{-1}{2} \right)^k \binom{n-1-k}{k-1} \\
                         &= \sum^{j-1}_{k=1} \left( \frac{-1}{2} \right)^k \binom{n-1-k}{k-1} \quad + \quad \left( \frac{-1}{2} \right)^j \binom{n-1-j}{j-1}. \\
  \end{align*}

  Using the fact that $\binom{n-1-k}{k-1} + \binom{n-1-k}{k} = \binom{n-k}{k},$ we compute
  $$X_{n-1} \ - \ \frac{1}{2}X_{n-2} \ \ = \ \ \left( \frac{-1}{2} \right)^0 \binom{n-1-0}{0} \ \  + \ \ \sum^{j-1}_{k=1} \left( \frac{-1}{2} \right)^k \binom{n-k}{k} \ \ + \ \ \left( \frac{-1}{2} \right)^j \binom{n-1-j}{j-1}.$$

  Recognizing that $\binom{n-1-0}{0} = \binom{n-0}{0}$ and that $j = n/2$ implies $\binom{n-1-j}{j-1} = \binom{n-j}{j}$, we obtain
  $$X_{n-1} \ \ - \ \ \frac{1}{2}X_{n-2} \ \ = \ \ \sum^{j}_{k=0} \left( \frac{-1}{2} \right)^k \binom{n-k}{k} \ \ = \ \ X_n.$$

  A similar argument holds for the case when $n = 2j+1$.

  \commentout{
    \hspace{-0.275in} \textbf{\underline{Case 2}:}
    Suppose that $n = 2j+1$ for some integer $j$.
    Formula \ref{eqn:homEuler} gives $$X_{n-1} = \sum^{j}_{k=0} \left( \frac{-1}{2} \right)^k \binom{n-1-k}{k}.$$
    We also have $$-\frac{1}{2}X_{n-2} = \sum^{j-1}_{k=0} \left( \frac{-1}{2} \right)^{k+1} \binom{n-2-k}{k} = \sum^{j}_{k=1} (-1)^{k} \frac{1}{2^{k}}\binom{n-1-k}{k-1},$$ where the second equality is obtained by reindexing.
    Therefore, $$X_{n-1} - \frac{1}{2}X_{n-2} = 1 + \sum^{j}_{k=1} \left( \frac{-1}{2} \right)^k \binom{n-1-k}{k} + \sum^{j}_{k=1} (-1)^{k} \frac{1}{2^{k}}\binom{n-1-k}{k-1}.$$
    Since $\binom{n-1-k}{k-1} + \binom{n-1-k}{k} = \binom{n-k}{k}$, we have 
    $$X_{n-1} - \frac{1}{2}X_{n-2} = 1 + \sum^{j}_{k=1} \left( \frac{-1}{2} \right)^k \binom{n-k}{k}.$$
    Observe that $(-1)^0 \frac{1}{2^0}\binom{n-0}{0} = 1$.
    Thus, $$X_{n-1} - \frac{1}{2}X_{n-2} = \sum^{j}_{k=0} \left( \frac{-1}{2} \right)^k \binom{n-k}{k} = X_n.$$
    \hspace{-0.275in} Thus, the result holds in both cases.
  }

\end{proof}

\begin{corollary}
  If we write $n = 4q + r$ where $0 \leq r \leq 3$ and $q$ is a non-negative integer, then
  $$\chi_n = \left\{ \begin{array}{ll}
                       (-1/4)^q \cdot n! & r \in \{0,1\} \\
                       (1/2) (-1/4)^q \cdot n! & r = 2 \\
                       0 & r = 3 \\
                     \end{array} \right.$$
                 \end{corollary}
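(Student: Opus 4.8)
The plan is to work entirely with the normalized sequence $X_n := \chi_n/n!$ introduced in the proof of Theorem~\ref{thm:eulerrecur}, which satisfies the two-term linear recursion $X_n = X_{n-1} - \tfrac12 X_{n-2}$. Since multiplying the final answer by $n!$ recovers the stated formula for $\chi_n$, it suffices to prove that $X_{4q+r}$ equals $(-1/4)^q$, $(-1/4)^q$, $\tfrac12(-1/4)^q$, and $0$ according as $r = 0, 1, 2, 3$.

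First I would record the base values. From $\chi_1 = \chi_2 = 1$ we get $X_1 = 1$ and $X_2 = \tfrac12$, and the recursion then gives $X_3 = X_2 - \tfrac12 X_1 = 0$ and $X_4 = X_3 - \tfrac12 X_2 = -\tfrac14$ (and, consistently with $\chi_0 = 1$ from~\eqref{eqn:homEuler}, $X_0 = 1$). Next comes the one structural observation needed: iterating the two-term recursion twice,
\[
  X_{n+4} = X_{n+3} - \tfrac12 X_{n+2} = \bigl(X_{n+2} - \tfrac12 X_{n+1}\bigr) - \tfrac12 X_{n+2} = \tfrac12 X_{n+2} - \tfrac12 X_{n+1},
\]
and substituting $X_{n+2} = X_{n+1} - \tfrac12 X_n$ yields $X_{n+4} = -\tfrac14 X_n$ for all $n \ge 0$.

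The corollary then follows by a one-line induction on $q$. For $r \in \{1,2,3\}$ we get $X_{4q+r} = (-1/4)^q X_r$ with $X_1 = 1$, $X_2 = \tfrac12$, $X_3 = 0$; for $r = 0$ and $q \ge 1$ we peel off one factor by writing $n = 4(q-1)+4$, giving $X_{4q} = (-1/4)^{q-1} X_4 = (-1/4)^q$. Multiplying through by $n!$ produces exactly the three stated cases. I do not expect any genuine obstacle here, since all the content lives in the recursion of Theorem~\ref{thm:eulerrecur}; the only point requiring care is the indexing in the $r = 0$ case, where one peels a single factor of $-1/4$ off $X_4$ rather than applying $X_{n+4} = -\tfrac14 X_n$ to $X_0$ (which also works but invokes the degenerate value $\chi_0 = 1$).
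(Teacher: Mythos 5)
Your proposal is correct and follows essentially the same route as the paper: both derive $X_{n+4} = -\tfrac14 X_n$ by iterating the two-term recursion from Theorem~\ref{thm:eulerrecur} and then induct on $q$ from the base values $X_1, X_2, X_3, X_4$. Your handling of the $r=0$ case by peeling a factor off $X_4$ is a slightly cleaner bookkeeping choice than the paper's convention of setting $X_0 = 1$, but the argument is the same.
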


                 \begin{proof}
                   It is straightforward to verify that $\chi_1 = \chi_2 = 1$, $\chi_3 = 0$, and $\chi_4 = -6$.
                   Let $X_n = \chi_n / n!$ as before, and recall that $X_n = X_{n-1} - \frac{1}{2}X_{n-2}$ for $n \geq 3$.
                   Then, we have $X_1 = 1$, $X_2 = 1/2$, $X_3 = 0$, and $X_4 = -1/4$. 
                   Now, for $n \geq 5$, we see that

                   $$\begin{array}{ll}
                       X_n &= X_{n-1} - \frac{1}{2}X_{n-2} \\
                           &= (X_{n-2} - \frac{1}{2}X_{n-3}) - \frac{1}{2}(X_{n-3} - \frac{1}{2}X_{n-4}) \\
                           &= X_{n-2} - X_{n-3} + \frac{1}{4}X_{n-4} \\
                           &= (X_{n-3} - \frac{1}{2}X_{n-4}) - X_{n-3} + \frac{1}{4}X_{n-4} \\
                           &= -\frac{1}{4}X_{n-4}.\\
                     \end{array}$$

                     Hence, $X_n = (-1/4)^q X_r$ for $n = 4q + r$ as in the statement.
                     Technically, $X_0$ is undefined, but we set it equal to $1$ to ensure that $X_4 = (-1/4)^1 X_0 = -1/4$ compatibly.
                     Multiplying by $n!$ proves the claim.
                   \end{proof}


                   \bibliographystyle{plain}
                   \bibliography{Braun}

\begin{thebibliography}{10}

\bibitem{BabsonKozlovComplexes}
Eric Babson and Dmitry~N. Kozlov.
\newblock Complexes of graph homomorphisms.
\newblock {\em Israel J. Math.}, 152:285--312, 2006.

\bibitem{BabsonKozlovLovaszConjecture}
Eric Babson and Dmitry~N. Kozlov.
\newblock Proof of the {L}ov\'asz conjecture.
\newblock {\em Ann. of Math. (2)}, 165(3):965--1007, 2007.

\bibitem{BirkhoffDistributive}
G.~{Birkhoff}.
\newblock {On the combination of subalgebras}.
\newblock {\em Proceedings of the Cambridge Philosophical Society}, 29:441,
  1933.

\bibitem{BraunBrowderKlee}
Benjamin Braun, Jonathan Browder, and Steven Klee.
\newblock Cellular resolutions of ideals defined by nondegenerate simplicial
  homomorphisms.
\newblock {\em Israel Journal of Mathematics}, pages 1--24, 2012.

\bibitem{csorbalutz}
P\'eter Csorba and Frank~H. Lutz.
\newblock Graph coloring manifolds.
\newblock In {\em Algebraic and geometric combinatorics}, volume 423 of {\em
  Contemp. Math.}, pages 51--69. Amer. Math. Soc., Providence, RI, 2006.

\bibitem{DochtermannEngstromCellular}
Anton Dochtermann and Alexander Engstr\"om.
\newblock Cellular resolutions of cointerval ideals.
\newblock {\em Math. Z.}, 270(1-2):145--163, 2012.

\bibitem{FormanMorseTheory}
Robin Forman.
\newblock Morse theory for cell complexes.
\newblock {\em Adv. Math.}, 134(1):90--145, 1998.

\bibitem{Hatcher}
Allen Hatcher.
\newblock {\em Algebraic topology}.
\newblock Cambridge University Press, Cambridge, 2002.

\bibitem{JonssonBook}
Jakob Jonsson.
\newblock {\em Simplicial complexes of graphs}, volume 1928 of {\em Lecture
  Notes in Mathematics}.
\newblock Springer-Verlag, Berlin, 2008.

\bibitem{jonssonhomset}
Jakob Jonsson.
\newblock Hom complexes of set systems.
\newblock {\em Electron. J. Combin.}, 20(1):Paper 4, 23, 2013.

\bibitem{KozlovBook}
Dmitry Kozlov.
\newblock {\em Combinatorial algebraic topology}, volume~21 of {\em Algorithms
  and Computation in Mathematics}.
\newblock Springer, Berlin, 2008.

\bibitem{KozlovSimpleFold}
Dmitry~N. Kozlov.
\newblock A simple proof for folds on both sides in complexes of graph
  homomorphisms.
\newblock {\em Proc. Amer. Math. Soc.}, 135(5):1265--1270, 2005.

\bibitem{LovaszChromaticNumberHomotopy}
L.~Lov{\'a}sz.
\newblock Kneser's conjecture, chromatic number, and homotopy.
\newblock {\em J. Combin. Theory Ser. A}, 25(3):319--324, 1978.

\bibitem{schultzmanifolds}
Carsten Schultz.
\newblock Small models of graph colouring manifolds and the {S}tiefel manifolds
  {${\rm Hom}(C_5,K_n)$}.
\newblock {\em J. Combin. Theory Ser. A}, 115(1):84--104, 2008.

\bibitem{SchultzSpacesOfCircuits}
Carsten Schultz.
\newblock Graph colorings, spaces of edges and spaces of circuits.
\newblock {\em Adv. Math.}, 221(6):1733--1756, 2009.

\bibitem{severswhite}
Christopher Severs and Jacob~A. White.
\newblock On the homology of the real complement of the {$k$}-parabolic
  subspace arrangement.
\newblock {\em J. Combin. Theory Ser. A}, 119(6):1336--1350, 2012.

\end{thebibliography}

                 \end{document}